\documentclass[12pt,a4paper]{article}

\usepackage{amsmath,amssymb}
\usepackage{amsthm}

\newtheorem{lemma}{Lemma}
\newtheorem{proposition}{Proposition}
\newtheorem{remark}{Remark}
\newtheorem{theorem}{Theorem}



\title{Discrete multitime multiple recurrence}

\author{Cristian Ghiu$^1$, Raluca Tulig\u{a}$^2$, Constantin Udri\c ste$^2$}
\date{}

\begin{document}

\maketitle

\begin{center}
{\footnotesize
$^1$University Politehnica of Bucharest,
Faculty of Applied Sciences,

Department of Mathematical Methods and Models,
Splaiul Independentei 313,

Bucharest 060042, Romania;
e-mail: crisghiu@yahoo.com

$^2$University Politehnica of Bucharest,
Faculty of Applied Sciences, Department of Mathematics-Informatics,
Splaiul Independentei 313,
Bucharest 060042, Romania;
e-mails: ralucacoada@yahoo.com; udriste@mathem.pub.ro
}
\end{center}

\begin{abstract}
The aim of our paper is to formulate and solve problems concerning multitime multiple recurrence equations.
We discuss in detail the generic properties and the existence and uniqueness of solutions. Among the general things,
we discuss in detail the cases of autonomous and non-autonomous recurrences,
highlighting in particular the theorems of existence and uniqueness of solutions.
Finally, are given interesting examples which are the analogue of arithmetic progression and the analogue of
geometric progression. The multitime multiple recurrences are required in analysis of algorithms,
computational biology, information theory, queueing theory, filters theory, statistical physics etc.
The theoretical part about them is little or not known, this being the first paper about the subject.
\end{abstract}

{\bf AMS Subject Classification (2010)}: 65Q99.

{\bf Keywords}:
multitime multiple recurrence, autonomous multitime recurrence, non-autonomous multitime recurrence.

\section{General statements}

A multivariate recurrence relation is an equation that recursively defines a multivariate sequence,
once one or more initial terms are given: each further term of the sequence is defined as a function
of the preceding terms. Some simply defined recurrence relations can have very complex (chaotic)
behaviors, and they are a part of the field of mathematics known as nonlinear analysis.
We can use such recurrences including the Differential Transform Method to
solve completely integrable first order PDEs system with initial conditions via discretization.

In this paper we shall refer to
{\it discrete multitime multiple recurrence}
(autonomous and non-autonomous),
giving original results regarding generic properties and existence and uniqueness of solutions.
Also, we seek to provide a fairly thorough and unified exposition of efficient
recurrence relations in both univariate and multivariate settings.
The scientific sources used by us are:
filters theory \cite{Ah}, \cite{DM}, \cite{FoM}-\cite{FMa},
\cite{Hu}-\cite{Pr},
general recurrence theory \cite{HK}, \cite{BP},
\cite{E}, \cite{WCM}, our results regarding the diagonal multitime recurrence \cite{GTU}-\cite{GTUT},
and multitime dynamical systems \cite{U1}-\cite{U6}.

Let $m\geq 1$ be an integer number.
We denote ${\bf 1}=(1,1,\ldots, 1) \in \mathbb{Z}^m$. Also, for each
$\alpha \in \{ 1,2, \ldots, m \}$, we denote
$1_{\alpha}=(0,\ldots,0,1,0,\ldots, 0)\in \mathbb{Z}^m$, i.e.,
$1_{\alpha}$ has $1$ on the position $\alpha$ and $0$ otherwise.

On $\mathbb{Z}^m$, we define the relation $``\leq"$:
for $t=(t^1,\ldots, t^m)$,
$s=(s^1,\ldots, s^m)$,
\begin{equation*}
s\leq t\,\,\,\, \mbox{if}\,\,\,
s^{\alpha} \leq t^{\alpha},\,\,
\forall
\alpha \in \{ 1,2, \ldots, m \}.
\end{equation*}
One observes that $``\leq"$ is a partial order relation on $\mathbb{Z}^m$.

Let $M$ be an arbitrary nonvoid set and $t_1\in \mathbb{Z}^m$ be a fixed element. 
We consider the functions $F_\alpha \colon \big\{ t \in \mathbb{Z}^m \, \big|\,
t \geq t_1 \big\} \times M \to M$, $\alpha \in \{ 1,2, \ldots, m \}$.

We fix $t_0 \in \mathbb{Z}^m$, $t_0 \geq t_1$.
A first order multitime recurrence of the type
\begin{equation}
\label{rec.alfa}
x(t+1_\alpha) = F_\alpha (t,x(t)),
\quad
\forall t \in \mathbb{Z}^m,\,\,
t\geq t_0,\,\,
\forall \alpha \in \{1,2, \ldots, m\},
\end{equation}
is called a {\it discrete multitime multiple recurrence}.

This model of multiple recurrence can be justified by the fact that a completely integrable first order PDE system
$$\frac{\partial x^i}{\partial t^\alpha}(t)= X^i_\alpha(t,x(t)), \, t \in \mathbb{R}^m$$
can be discretized as
$$x^i(t+1_\alpha) = F^i_\alpha(t,x(t)),\, t\in \mathbb{Z}^m.$$
The initial (Cauchy) condition, for the PDE system, is translated
into initial condition for the multiple recurrence.

\begin{proposition}
\label{alfa.p1}
If for any $(t_0,x_0) \in
\big\{ t \in \mathbb{Z}^m \, \big|\, t \geq t_1 \big\}
\times M$,
there exists at least one solution
$x \colon
\big\{ t \in \mathbb{Z}^m \, \big|\,
t \geq t_0 \big\} \to M$
which verifies the recurrence {\rm (\ref{rec.alfa})} and the initial condition
$x(t_0)=x_0$, then
\begin{equation}
\label{alfap1.1}
F_\alpha(t+1_\beta,F_\beta (t,x))
=
F_\beta(t+1_\alpha,F_\alpha (t,x)),
\quad
\quad
\forall t \geq t_1,\,\,
\forall x \in M,
\end{equation}
$$
\forall \alpha, \beta \in \{1,2, \ldots, m\}.
$$
\end{proposition}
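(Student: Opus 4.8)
The plan is to exploit the commutativity of the two translations $1_\alpha$ and $1_\beta$ in $\mathbb{Z}^m$: since $t+1_\alpha+1_\beta = t+1_\beta+1_\alpha$, a solution that reaches this common lattice point by stepping first in direction $\alpha$ and then in direction $\beta$ must agree with the value obtained by stepping first in $\beta$ and then in $\alpha$. Concretely, I would fix an arbitrary $t\geq t_1$ and $x\in M$ and invoke the hypothesis with the initial data $(t_0,x_0)=(t,x)$; this choice is legitimate precisely because $t\geq t_1$, and it produces a solution $x(\cdot)$ defined on $\{s\in\mathbb{Z}^m : s\geq t\}$ with $x(t)=x$ and satisfying (\ref{rec.alfa}) at every $s\geq t$.

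Next, I would evaluate $x$ at the common point along both coordinate paths. One application of the recurrence at $s=t$ gives $x(t+1_\alpha)=F_\alpha(t,x)$ and $x(t+1_\beta)=F_\beta(t,x)$. A second application — at $s=t+1_\beta$ and at $s=t+1_\alpha$, both admissible since $t+1_\alpha,\,t+1_\beta\geq t=t_0$ — then yields
$$x(t+1_\beta+1_\alpha)=F_\alpha\big(t+1_\beta,F_\beta(t,x)\big),$$
$$x(t+1_\alpha+1_\beta)=F_\beta\big(t+1_\alpha,F_\alpha(t,x)\big).$$
The two left-hand sides are the value of $x$ at one and the same point of $\mathbb{Z}^m$, hence coincide in $M$; equating the right-hand sides gives exactly (\ref{alfap1.1}).

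Since $t\geq t_1$ and $x\in M$ were arbitrary and the roles of $\alpha$ and $\beta$ are symmetric, the identity holds for all required indices. The argument is in essence a verification, so I expect no genuine obstacle; the only points needing care are the domain checks — that $t+1_\alpha,\,t+1_\beta\geq t_1$, so that $F_\alpha$ and $F_\beta$ are evaluated inside their domain $\{s\geq t_1\}\times M$, and that the second use of the recurrence is licensed by $t+1_\alpha,\,t+1_\beta\geq t_0$. The single modeling insight that drives everything is the decision to take the initial time $t_0$ to be the very point $t$ at which (\ref{alfap1.1}) is desired.
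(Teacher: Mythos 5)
Your proof is correct and follows essentially the same route as the paper: both exploit $x(t+1_\alpha+1_\beta)=x(t+1_\beta+1_\alpha)$ for a solution started at the point where the identity is wanted, and both note that the arbitrariness of the initial data yields the claim for all $t\geq t_1$ and $x\in M$. The only cosmetic issue is the double use of the symbol $x$ for both the element of $M$ and the solution function, which you should disambiguate in a final write-up.
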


\begin{proof}
Let $t \geq t_0$. The equality
$x(t+1_\beta+1_\alpha) = x(t+1_\alpha+1_\beta)$
is equivalent to
$$
F_\alpha (t+1_\beta,x(t+1_\beta))
=
F_\beta (t+1_\alpha,x(t+1_\alpha))
$$
$$
\Longleftrightarrow
F_\alpha \big(t+1_\beta,F_\beta(t,x(t))\big)
=
F_\beta \big(t+1_\alpha, F_\alpha(t,x(t)) \big).
$$
For $t=t_0$, one obtains
$$
F_\alpha \big(t_0+1_\beta,F_\beta(t_0,x_0)\big)
=
F_\beta \big(t_0+1_\alpha, F_\alpha(t_0,x_0) \big).
$$
Since $t_0$ and $x_0$ are arbitrary, it follows the relations (\ref{alfap1.1}), $\forall \alpha, \beta$.
\end{proof}

\section{Autonomous discrete multitime\\ multiple recurrence}

Let $M$ be a nonvoid set. For any function $G \colon M \to M$, we denote
\begin{equation*}
G^{(n)}=
\left\{
\begin{array}{ll}
\underbrace{G \circ G \circ \ldots \circ G}_n, & \hbox{if}\,\,\,  n \geq 1;
\vspace{0.1 cm}\\
\qquad\,\,\, {\rm Id}_M, & \hbox{if}\,\,\,  n =0.
\end{array}
\right.
\end{equation*}

\subsection{Existence and uniqueness Theorem}

\begin{theorem}
\label{alfa.t1}
We consider the functions
$G_{\alpha} \colon M \to M$, $\alpha \in \{ 1,2, \ldots, m \}$.

\noindent
$a)$ Let $t_0 \in \mathbb{Z}^m $. If for any $x_0 \in  M$,
there exists at least one function
$$
x \colon \big\{ t \in \mathbb{Z}^m \, \big|\,
t \geq t_0 \big\} \to M,\,\,
$$
which verifies the recurrence equation
\begin{equation}
\label{ecalfat1.1}
x(t+1_\alpha) = G_\alpha (x(t)),
\quad
\forall t \geq t_0,\,\,
\forall \alpha \in \{1,2, \ldots, m\},
\end{equation}
and the initial condition $x(t_0)=x_0$, then
\begin{equation}
\label{ecalfat1.2}
G_{\alpha}\circ G_{\beta}
=
G_{\beta} \circ G_{\alpha},
\quad
\forall \alpha, \beta \in \{1,2, \ldots, m\}.
\end{equation}

\noindent
$b)$ If, for any
$\alpha, \beta \in \{1,2, \ldots, m\}$,
the relations {\rm (\ref{ecalfat1.2})} are satisfied, then,
for any $(t_0,x_0) \in \mathbb{Z}^m \times M$,
there exists a unique $m$-sequence
$x \colon \big\{ t \in \mathbb{Z}^m \, \big|\,
t \geq t_0 \big\} \to M$ which verifies the recurrence {\rm (\ref{ecalfat1.1})}
and the initial condition $x(t_0)=x_0$; this sequence is definite by the formula
\begin{equation}
\label{ecalfat1.3}
x(t)=
G_1^{(t^1-t_0^1)} \circ G_2^{(t^2-t_0^2)}  \circ
\ldots \circ G_m^{(t^m-t_0^m)}(x_0),
\quad
\forall t \geq t_0.
\end{equation}
\end{theorem}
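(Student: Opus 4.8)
The plan is to treat the two parts separately, since they require genuinely different arguments. For part $a)$, I would specialize the computation in the proof of Proposition~\ref{alfa.p1} to the autonomous case $F_\alpha(t,x)=G_\alpha(x)$. Given $x_0\in M$, fix a solution $x$ with $x(t_0)=x_0$. Evaluating the corner value $x(t_0+1_\alpha+1_\beta)$ in the two possible orders and using the recurrence (\ref{ecalfat1.1}) gives $x(t_0+1_\alpha+1_\beta)=G_\beta\big(G_\alpha(x_0)\big)$ on one hand and $G_\alpha\big(G_\beta(x_0)\big)$ on the other, whence $G_\alpha\big(G_\beta(x_0)\big)=G_\beta\big(G_\alpha(x_0)\big)$. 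Since $x_0$ is arbitrary, this is exactly (\ref{ecalfat1.2}); no $t$-dependence survives, so evaluation at the single time $t_0$ already yields the full conclusion.

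For the existence half of part $b)$, I would simply take $x$ to be the function defined by the explicit formula (\ref{ecalfat1.3}) and verify that it works. The initial condition is immediate, because at $t=t_0$ every exponent $t^\alpha-t_0^\alpha$ vanishes and $G_\alpha^{(0)}=\mathrm{Id}_M$, so $x(t_0)=x_0$. To check the recurrence I first record the auxiliary fact that pairwise commutativity propagates to iterates: if $G_\alpha\circ G_\beta=G_\beta\circ G_\alpha$, then $G_\alpha\circ G_\beta^{(n)}=G_\beta^{(n)}\circ G_\alpha$ for every $n\geq 0$, by a one-line induction on $n$. Writing $n_\beta=t^\beta-t_0^\beta$, in $x(t+1_\alpha)$ only the $\alpha$-th exponent is raised by one; using $G_\alpha^{(n_\alpha+1)}=G_\alpha\circ G_\alpha^{(n_\alpha)}$ and then sliding the extra factor $G_\alpha$ leftward past the blocks $G_\beta^{(n_\beta)}$ with $\beta<\alpha$ (legitimate by the auxiliary fact), I bring $G_\alpha$ to the front and obtain $x(t+1_\alpha)=G_\alpha(x(t))$.

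For the uniqueness half, I would argue by strong induction on the integer $N(t)=\sum_{\alpha=1}^{m}(t^\alpha-t_0^\alpha)\geq 0$. Suppose $x$ and $y$ both satisfy (\ref{ecalfat1.1}) with $x(t_0)=y(t_0)=x_0$. When $N(t)=0$ we have $t=t_0$ and $x(t)=x_0=y(t)$. When $N(t)>0$, choose $\alpha$ with $t^\alpha>t_0^\alpha$; then $t-1_\alpha\geq t_0$ and $N(t-1_\alpha)=N(t)-1$, while the recurrence gives $x(t)=G_\alpha\big(x(t-1_\alpha)\big)$ and $y(t)=G_\alpha\big(y(t-1_\alpha)\big)$. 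By the induction hypothesis $x(t-1_\alpha)=y(t-1_\alpha)$, hence $x(t)=y(t)$. Since the function built from (\ref{ecalfat1.3}) is one such solution, it is the unique one.

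The main obstacle is the recurrence check in the existence step, precisely the reordering of the composition: pairwise commutativity of the generators must be upgraded to commutativity of a single generator with arbitrary iterates of the others, and only after that short induction does the sliding of $G_\alpha$ to the front become mechanical. It is worth emphasizing that commutativity (\ref{ecalfat1.2}) is used only for existence; uniqueness holds for any two solutions regardless of (\ref{ecalfat1.2}), because the recurrence deterministically forces each value from values at strictly smaller $N$.
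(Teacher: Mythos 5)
Your proposal is correct. Part $a)$ and the existence half of part $b)$ follow essentially the same route as the paper: evaluating the corner value $x(t_0+1_\alpha+1_\beta)$ in both orders for $a)$, and verifying that the explicit formula (\ref{ecalfat1.3}) satisfies the recurrence by raising the $\alpha$-th exponent and sliding the extra factor $G_\alpha$ to the front; your auxiliary observation that $G_\alpha\circ G_\beta^{(n)}=G_\beta^{(n)}\circ G_\alpha$ makes explicit a step the paper leaves implicit. Where you genuinely diverge is the uniqueness half. The paper proves it by induction on the number $m$ of time components: it freezes the first coordinate at $t_0^1$, unfolds the one-dimensional recurrence in $t^1$ to get $x(t)=G_1^{(t^1-t_0^1)}(\tilde{x}(\tilde{t}))$, and applies the induction hypothesis to the restricted $(m-1)$-time solution $\tilde{x}$, thereby deriving the closed form (\ref{ecalfat1.3}) for an arbitrary solution. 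You instead induct on $N(t)=\sum_\alpha(t^\alpha-t_0^\alpha)$ and show directly that any two solutions with the same initial value coincide. Your argument is more elementary, and it isolates the clean fact that uniqueness needs no commutativity at all (commutativity is only needed so that \emph{some} solution exists); the paper's dimensional induction buys instead a direct derivation of the product formula for every solution, which is structurally tied to the factorized form of (\ref{ecalfat1.3}). Both are complete proofs of the theorem as stated.
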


\begin{proof}
$a)$ The equality
$x(t_0+1_\beta+1_\alpha) = x(t_0+1_\alpha+1_\beta)$
is equivalent to
$$
G_\alpha (x(t_0+1_\beta))
=
G_\beta (x(t_0+1_\alpha))
\Longleftrightarrow
G_\alpha (G_\beta (x(t_0)))
=
G_\beta (G_\alpha (x(t_0)))
$$
$$
\Longleftrightarrow
G_\alpha \circ G_\beta (x_0)
=
G_\beta \circ G_\alpha (x_0).
$$

Since $x_0$ is arbitrary,
it follows the relations (\ref{ecalfat1.2}),
$\forall \alpha, \beta$.

$b)$ Firstly we remark that any sequence of the form
(\ref{ecalfat1.3}) verifies the relations
(\ref{ecalfat1.1})
and the initial condition $x(t_0)=x_0$:
\begin{equation}
\label{ecalfat1.4}
x(t+1_\alpha) =
G_1^{(t^1-t_0^1)}
\circ
\ldots
\circ
G_{\alpha}^{(t^{\alpha}+1-t_0^{\alpha})}
\circ
\ldots \circ G_m^{(t^m-t_0^m)}(x_0);
\end{equation}
using (\ref{ecalfat1.2}) and the relation (\ref{ecalfat1.4}), it follows
\begin{equation*}
x(t+1_\alpha) =
G_{\alpha}
\circ
G_1^{(t^1-t_0^1)}
\circ
\ldots
\circ
G_{\alpha}^{(t^{\alpha}-t_0^{\alpha})}
\circ
\ldots \circ G_m^{(t^m-t_0^m)}(x_0)
=G_{\alpha}(x(t)).
\end{equation*}
The initial condition $x(t_0)=x_0$ is checked immediately.

The necessity is proved by induction after $m$, the components number of the point $t=(t^1,...,t^m)$.

For $m=1$, we have $t=t^1$ and $t_0=t_0^1$.
If $t>t_0$, then
\begin{equation*}
x(t)=x(t^1)=
G_1(x(t^1-1))
=G_1^{(2)}(x(t^1-2))=
\end{equation*}
\begin{equation*}
=
\ldots
=G_1^{(k)}(x(t^1-k))=
\ldots
=G_1^{(t^1-t_0^1)}(x(t_0^1))
=G_1^{(t^1-t_0^1)}(x_0).
\end{equation*}
For $t=t_0$, the relation $x(t)=G_1^{(t^1-t_0^1)}(x_0)$ is obvious.

Let $m\geq 2$. Suppose that the relation is true for
$m-1$ and we shall prove it for $m$.
We denote $\tilde{t}=(t^2, \ldots, t^m)$;
$\tilde{t}_0=(t_0^2, \ldots, t_0^m)$.

Let $\tilde{x}(\tilde{t})
=x(t_0^1,\tilde{t})
=x(t_0^1,t^2, \ldots, t^m)$.
If $t^1>t_0^1$, then
\begin{equation*}
x(t)=x(t^1,\tilde{t})=
G_1(x(t^1-1,\tilde{t}))
=G_1^{(2)}(x(t^1-2,\tilde{t}))=
\end{equation*}
\begin{equation*}
=
\ldots
=G_1^{(k)}(x(t^1-k,\tilde{t}))=
\ldots
=G_1^{(t^1-t_0^1)}(x(t_0^1,\tilde{t}))
=
G_1^{(t^1-t_0^1)}(\tilde{x}(\tilde{t})).
\end{equation*}
We have proved that if $t^1>t_0^1$, then
$x(t)=G_1^{(t^1-t_0^1)}(\tilde{x}(\tilde{t}))$;
the relation is verified automatically also for $t^1=t_0^1$.

For $\alpha \in \{ 2, \ldots, m \}$, we denote
$\tilde{1}_{\alpha}=(0,\ldots,0,1,0,\ldots, 0)\in \mathbb{Z}^{m-1}$;
hence $1_{\alpha}=(0,\tilde{1}_{\alpha})$.
For $\alpha \geq 2$ and $t^1=t_0^1$, the relations
(\ref{ecalfat1.1}) become
$x((t_0^1,\tilde{t})+(0,\tilde{1}_{\alpha}))
= G_\alpha (x(t_0^1,\tilde{t}))$,\, i.e.,
\begin{equation*}
\tilde{x}(\tilde{t}+\tilde{1}_{\alpha}) = G_\alpha (\tilde{x}(\tilde{t})),
\quad
\forall\, \tilde{t} \geq \tilde{t}_0,\,\,
\forall \alpha \in \{2, \ldots, m\}.
\end{equation*}
Obviously $\tilde{x}(\tilde{t}_0)
=x(t_0^1,\tilde{t}_0)=x(t_0)=x_0$.
Since $\tilde{t}$ has $m-1$ components,
from the induction hypothesis it follows
\begin{equation*}
\tilde{x}(\tilde{t})=
G_2^{(t^2-t_0^2)}  \circ
\ldots \circ G_m^{(t^m-t_0^m)}(x_0),
\quad
\forall \tilde{t} \geq \tilde{t}_0.
\end{equation*}

Consequently, for any $t\geq t_0$, we have
\begin{equation*}
x(t)=
G_1^{(t^1-t_0^1)}(\tilde{x}(\tilde{t}))
=
G_1^{(t^1-t_0^1)} \circ G_2^{(t^2-t_0^2)}  \circ
\ldots \circ G_m^{(t^m-t_0^m)}(x_0).
\end{equation*}
\end{proof}

\subsection{Extension theorems}

\begin{lemma}
\label{alfa.l1}
Let $G \colon M \to M$ be an arbitrary function
and $t_0 \in \mathbb{Z}^m$,
$\beta \in \{ 1,2, \ldots, m \}$, fixed.
If for any $x_0 \in  M$, there exists at least one function
$$
x \colon \big\{ t \in \mathbb{Z}^m \, \big|\,
t \geq t_0-1_{\beta} \big\} \to M,
$$
which verifies relation
\begin{equation}
\label{ecalfal1.1}
x(t+1_{\beta}) = G (x(t)),
\quad
\forall t \geq t_0-1_{\beta},
\end{equation}
and the condition $x(t_0)=x_0$, then
$G$ is surjective (onto).
\end{lemma}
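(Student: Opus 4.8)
The plan is to exploit the one unit of ``backward room'' that the hypothesis builds into the domain: the solution is required to exist not merely on $\{t \geq t_0\}$ but on $\{t \geq t_0 - 1_\beta\}$, so the recurrence can be read at a point strictly below $t_0$ in the $\beta$-direction. First I would fix an arbitrary target value $x_0 \in M$ and apply the hypothesis to produce a function $x$ on $\{t \geq t_0 - 1_\beta\}$ that satisfies (\ref{ecalfal1.1}) together with the condition $x(t_0) = x_0$.

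The key step is to evaluate the recurrence (\ref{ecalfal1.1}) at the single point $t = t_0 - 1_\beta$. This is legitimate because the relation is required to hold for all $t \geq t_0 - 1_\beta$, and $t_0 - 1_\beta$ is the smallest such point; it lies in the domain of $x$ precisely because of the enlarged domain. Substituting gives $x\big((t_0 - 1_\beta) + 1_\beta\big) = G(x(t_0 - 1_\beta))$, that is, $x(t_0) = G(x(t_0 - 1_\beta))$. Using the initial condition $x(t_0) = x_0$, I would conclude $x_0 = G\big(x(t_0 - 1_\beta)\big)$.

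Setting $y := x(t_0 - 1_\beta) \in M$, this exhibits $x_0$ as $G(y)$, so $x_0$ lies in the image of $G$. Since $x_0$ was an arbitrary element of $M$, every element of $M$ has a $G$-preimage, which is exactly the surjectivity of $G$. There is no genuine obstacle here: the entire content of the lemma is the observation that granting existence of a solution one step below $t_0$ forces each prescribed value $x_0 = x(t_0)$ to be a $G$-image. The only point worth checking carefully is that $t = t_0 - 1_\beta$ is a valid index for the recurrence, which is immediate from the stated range $t \geq t_0 - 1_\beta$.
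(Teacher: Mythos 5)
Your proof is correct and is essentially identical to the paper's: both take an arbitrary target $x_0$, invoke the hypothesis to get a solution with $x(t_0)=x_0$, and evaluate the recurrence at $t=t_0-1_\beta$ to exhibit $x_0=G\big(x(t_0-1_\beta)\big)$. Nothing further is needed.
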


\begin{proof}
Let $y \in M$. There exists a function $x(\cdot)$ which verifies
{\rm (\ref{ecalfal1.1})} and the condition $x(t_0)=y$.
For $t = t_0-1_{\beta}$, one obtains $x(t_0)=G (x(t_0-1_{\beta}))$, hence
$G (x(t_0-1_{\beta}))=y$.
Because $y$ is arbitrary, it follows that the function $G$ is surjective.
\end{proof}

\begin{proposition}
\label{alfa.p2}
We consider the functions $G_{\alpha} \colon M \to M$,
for $\alpha \in \{ 1,2 \ldots, m \}$.

\noindent
$a)$ Let $t_0 \in \mathbb{Z}^m$ and
$\alpha_0 \in \{ 1,2, \ldots, m \}$, fixed.
If for any $x_0 \in  M$, there exists at least one function
$x \colon \big\{ t \in \mathbb{Z}^m \, \big|\,
t \geq t_0-1_{\alpha_0} \big\} \to M$,
which verifies
\begin{equation}
\label{ecalfap2.1}
x(t+1_\alpha) = G_\alpha (x(t)),
\end{equation}
\begin{equation*}
\forall t \geq t_0-1_{\alpha_0},\,\,
\forall \alpha \in \{1,2, \ldots, m\},
\end{equation*}
and the condition $x(t_0)=x_0$,
then $G_{\alpha_0}$ is surjective and
\begin{equation}
\label{ecalfap2.3}
G_{\alpha}\circ G_{\beta}
=
G_{\beta} \circ G_{\alpha},
\quad
\forall \alpha, \beta \in \{1,2, \ldots, m\}.
\end{equation}

\noindent
$b)$ Suppose that, for any $\alpha\in \{1,2, \ldots, m\}$, the functions
$G_{\alpha}$ are surjective and that, for any
$\alpha, \beta \in \{1,2, \ldots, m\}$,
the relations {\rm (\ref{ecalfap2.3})} are satisfied.

Let $(t_0,x_0) \in \mathbb{Z}^m \times M$
and $s \in \mathbb{Z}^m$, $s \leq t_0$.

If for $a \in M$, we have $G_1^{(t_0^1-s^1)} \circ G_2^{(t_0^2-s^2)}  \circ
\ldots \circ G_m^{(t_0^m-s^m)}(a)=x_0$, then the function
\begin{equation*}
x \colon \big\{ t \in \mathbb{Z}^m \, \big|\,
t \geq s \big\} \to M,
\end{equation*}
\begin{equation}
\label{ecalfap2.5}
x(t)=
G_1^{(t^1-s^1)} \circ G_2^{(t^2-s^2)}  \circ
\ldots \circ G_m^{(t^m-s^m)}(a),
\quad
\forall t \geq s,
\end{equation}
verifies the recurrence {\rm (\ref{ecalfap2.1})},
$
\forall t \geq s,\,\,
\forall \alpha \in \{1,2, \ldots, m\},
$
and $x(t_0)=x_0$.

\noindent
$c)$ Suppose that, for any
$\alpha\in \{1,2, \ldots, m\}$, the functions
$G_{\alpha}$ are surjective and that, for any
$\alpha, \beta \in \{1,2, \ldots, m\}$,
the relations {\rm (\ref{ecalfap2.3})} are satisfied.

Then, for any $(t_0,x_0) \in \mathbb{Z}^m \times M$,
there exists at least one function
$x \colon  \mathbb{Z}^m  \to M$
which verifies the recurrence {\rm (\ref{ecalfap2.1})},
$\forall t \in \mathbb{Z}^m$,\,
$\forall \alpha \in \{1,2, \ldots, m\}$,
and the condition $x(t_0)=x_0$.
\end{proposition}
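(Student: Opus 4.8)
The plan is to build the required bi-infinite solution by gluing together the one-sided solutions furnished by part $b)$ along an increasing family of cones exhausting $\mathbb{Z}^m$. Concretely, I would set $s_n = t_0 - n\,{\bf 1}$ for $n \geq 0$, so that $s_n \leq t_0$, the cones $C_n = \{ t \in \mathbb{Z}^m \mid t \geq s_n \}$ satisfy $C_n \subseteq C_{n+1}$, and $\bigcup_{n \geq 0} C_n = \mathbb{Z}^m$ (any $t$ lies in $C_n$ as soon as $n \geq \max_\alpha(t_0^\alpha - t^\alpha)$). The goal is to produce, for each $n$, a solution $\xi_n \colon C_n \to M$ of the recurrence with $\xi_n(t_0) = x_0$, chosen so that the family is coherent, i.e. $\xi_{n+1}|_{C_n} = \xi_n$; then $x(t) := \xi_n(t)$ for any $n$ with $t \in C_n$ is well defined and solves the problem on all of $\mathbb{Z}^m$.

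To invoke part $b)$ on $C_n$ I need an element $a_n \in M$ with $G_1^{(n)} \circ \cdots \circ G_m^{(n)}(a_n) = x_0$. Writing $\Phi_n = G_1^{(n)} \circ \cdots \circ G_m^{(n)}$ and $\Psi = G_1 \circ \cdots \circ G_m$, note that $\Phi_{n+1} = \Phi_n \circ \Psi$ and that $\Psi$ is surjective, being a composition of surjections. I would therefore define the $a_n$ by a backward recursion: put $a_0 = x_0$ and, using surjectivity of $\Psi$, choose $a_{n+1}$ with $\Psi(a_{n+1}) = a_n$. An immediate induction gives $\Phi_n(a_n) = x_0$ for every $n$ (the case $n=0$ is $\Phi_0 = \mathrm{Id}$, and $\Phi_{n+1}(a_{n+1}) = \Phi_n(\Psi(a_{n+1})) = \Phi_n(a_n) = x_0$). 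Hence part $b)$ applies with $s = s_n$ and $a = a_n$, yielding for each $n$ the solution $\xi_n$ on $C_n$ defined by (\ref{ecalfap2.5}), which satisfies $\xi_n(t_0) = x_0$.

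The crucial step is coherence. For $t \in C_n$ set $p_\alpha = t^\alpha - s_n^\alpha \geq 0$; since $s_{n+1} = s_n - {\bf 1}$, the exponents appearing in $\xi_{n+1}(t)$ are $p_\alpha + 1$. Using the pairwise commutativity (\ref{ecalfap2.3}) to move one factor of each $G_\alpha$ to the right, one rewrites
\begin{align*}
\xi_{n+1}(t) &= G_1^{(p_1+1)} \circ \cdots \circ G_m^{(p_m+1)}(a_{n+1}) \\
&= G_1^{(p_1)} \circ \cdots \circ G_m^{(p_m)} \circ \Psi(a_{n+1}) = G_1^{(p_1)} \circ \cdots \circ G_m^{(p_m)}(a_n) = \xi_n(t),
\end{align*}
where the middle equality uses $\Psi(a_{n+1}) = a_n$ and the last is the defining formula for $\xi_n(t)$. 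Thus $\xi_{n+1}|_{C_n} = \xi_n$, so $x$ is well defined on $\mathbb{Z}^m$. It then satisfies the recurrence everywhere — given $t$ and $\alpha$, pick $n$ large enough that $t, t+1_\alpha \in C_n$ and use that $\xi_n$ solves (\ref{ecalfap2.1}) on $C_n$ — and $x(t_0) = \xi_0(t_0) = x_0$.

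I expect the coherence computation to be the only real obstacle: it is exactly the place where both hypotheses are indispensable, commutativity to perform the reordering and surjectivity (of the composite $\Psi$) to run the backward recursion defining the $a_n$. Everything else — the exhaustion of $\mathbb{Z}^m$ by the cones $C_n$ and the verification that the glued map solves the recurrence with the prescribed value at $t_0$ — is routine once part $b)$ is in hand. Note also that no uniqueness is claimed here, consistently with the fact that different admissible choices of the $a_n$ (when the $G_\alpha$ are not injective) produce genuinely different extensions.
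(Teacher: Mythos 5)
Your argument for part $c)$ is correct and is essentially the paper's own proof of that part: the same exhaustion of $\mathbb{Z}^m$ by the cones $\big\{ t \,\big|\, t \geq t_0 - n\cdot {\bf 1} \big\}$, the same backward orbit $a_0 = x_0$, $\Psi(a_{n+1}) = a_n$ under the surjection $\Psi = G_1 \circ \cdots \circ G_m$ (the paper phrases the choice by fixing a right inverse $H$ of $\Psi$ and setting $a_n = H^{(n)}(x_0)$, which is the same device), and the same coherence computation in which commutativity is used to peel one copy of each $G_\alpha$ off to the right. The verification that the glued map solves the recurrence and takes the value $x_0$ at $t_0$ is also handled as in the paper.

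The gap is one of coverage: the proposition has three parts, and you prove only $c)$, while explicitly using $b)$ as an input — so $a)$ and $b)$ cannot be left unaddressed. Both are short. For $a)$: given $y \in M$, take a solution with $x(t_0) = y$ and evaluate the recurrence at $t = t_0 - 1_{\alpha_0}$ to get $y = G_{\alpha_0}\big(x(t_0 - 1_{\alpha_0})\big)$, proving surjectivity of $G_{\alpha_0}$; the commutation relations (\ref{ecalfap2.3}) then follow by restricting each solution to $\big\{ t \,\big|\, t \geq t_0 \big\}$ and applying Theorem~\ref{alfa.t1}, $a)$. For $b)$: given $a$ with $G_1^{(t_0^1-s^1)} \circ \cdots \circ G_m^{(t_0^m-s^m)}(a) = x_0$, Theorem~\ref{alfa.t1}, $b)$, applied with initial data $(s,a)$ shows that the formula (\ref{ecalfap2.5}) defines a solution on $\big\{ t \,\big|\, t \geq s \big\}$, and evaluating at $t = t_0$ gives $x(t_0) = x_0$ (surjectivity of the $G_\alpha$ is what guarantees such an $a$ exists, which is how you use $b)$ inside $c)$). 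With these two additions your write-up coincides with the paper's proof.
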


\begin{proof}
$a)$ The surjectivity of $G_{\alpha_0}$ follows from Lemma \ref{alfa.l1}.
The relations {\rm (\ref{ecalfap2.3})} are obtained from
Theorem {\rm \ref{alfa.t1}}, $a)$,
considering the restriction of $x(\cdot)$ to the set
$\big\{ t \in \mathbb{Z}^m \, \big|\, t \geq t_0 \big\}$.

$b)$ We observe that the function
$G_1^{(t_0^1-s^1)} \circ G_2^{(t_0^2-s^2)}  \circ
\ldots \circ G_m^{(t_0^m-s^m)}$ is surjective,
since $t_0^{\alpha}-s^{\alpha} \geq 0$, $\forall \alpha$,
and $G_\alpha$ are surjective. Consequently, there exists $a \in M$ such that
$G_1^{(t_0^1-s^1)} \circ G_2^{(t_0^2-s^2)}  \circ
\ldots \circ G_m^{(t_0^m-s^m)}(a)=x_0$.

From the Theorem {\rm \ref{alfa.t1}}, $b)$, it follows the function defined by the formula
{\rm (\ref{ecalfap2.5})} is the unique function which verifies the recurrence
{\rm (\ref{ecalfap2.1})}, $\forall t \geq s$, $\forall \alpha$,
and the condition $x(s)=a$. For $t=t_0$, we have
$$
x(t_0)=
G_1^{(t_0^1-s^1)} \circ G_2^{(t_0^2-s^2)}  \circ
\ldots \circ G_m^{(t_0^m-s^m)}(a)=x_0.
$$

$c)$ Let $G=G_1 \circ G_2  \circ \ldots \circ G_m$.
Since the functions $G_\alpha$ are surjective, it follows that the function
$G$ is surjective. Hence, there exists a function $H \colon M \to M$
such that $G \circ H ={\rm Id}_M$ (right inverse).

For $n \in \mathbb{N}$, we denote
$P_n=\big\{ t \in \mathbb{Z}^m \, \big|\,
t \geq t_0-n \cdot {\bf 1}  \big\}$; let
$a_n=H^{(n)}(x_0)$.
We observe that $G(a_{n+1})=a_n$ and
$G^{(n)}(a_n)=x_0$, $\forall n \in \mathbb{N}$.

For each $n\in \mathbb{N}$, we consider the function
$y_n \colon P_n \to M$, defined by
\begin{equation*}
y_n(t)=
G_1^{(t^1-t_0^1+n)} \circ G_2^{(t^2-t_0^2+n)}  \circ
\ldots \circ G_m^{(t^m-t_0^m+n)}(a_n),
\quad
\forall t \geq t_0-n \cdot {\bf 1}.
\end{equation*}

Because $G^{(n)}(a_n)=x_0$, i.e.,
$G_1^{(n)} \circ G_2^{(n)}  \circ
\ldots \circ G_m^{(n)}(a_n)=x_0$,
according the step $b)$, it follows that the function
$y_n$ verifies the recurrence
{\rm (\ref{ecalfap2.1})},
$\forall t \in P_n$, $\forall \alpha$
and the condition $y_n(t_0)=x_0$.

We remark that $P_n \subseteq P_{n+1}$.
For $t \in P_n$, we have
\begin{equation*}
y_{n+1}(t)=
G_1^{(t^1-t_0^1+n+1)} \circ G_2^{(t^2-t_0^2+n+1)}  \circ
\ldots \circ G_m^{(t^m-t_0^m+n+1)}(a_{n+1})
\end{equation*}
\begin{equation*}
=
G_1^{(t^1-t_0^1+n)} \circ G_2^{(t^2-t_0^2+n)}  \circ
\ldots \circ G_m^{(t^m-t_0^m+n)}\big( G(a_{n+1}) \big)
\end{equation*}
\begin{equation*}
=
G_1^{(t^1-t_0^1+n)} \circ G_2^{(t^2-t_0^2+n)}  \circ
\ldots \circ G_m^{(t^m-t_0^m+n)}( a_n )
=y_n(t).
\end{equation*}
We showed that $y_{n+1}(t)=y_n(t)$,
$\forall t \in P_n$.
Inductively, one deduces that, for any $q \in \mathbb{N}$, we have
$y_{n+q}(t)=y_n(t)$,
$\forall t \in P_n$.
Consequently, $y_n(t)=y_k(t)$,
$\displaystyle \forall t\in P_{\min \{n,k \}}$.

Let us define the function
$x \colon  \mathbb{Z}^m  \to M$.

Let $t \in \mathbb{Z}^m$.
Since
$\mathbb{Z}^m =\displaystyle \bigcup_{n \in \mathbb{N}}P_n$,
there exists $n\in \mathbb{N}$, such that
$t \in P_n$. The value of the function $x$ at $t$
will be $x(t)=y_n(t)$.

The function $x(\cdot)$ is well defined
since if $t \in P_n$ and $t\in P_k$, we have showed that
$y_n(t)=y_k(t)$.

If $t \in P_n$, then $t+1_{\alpha} \in P_n$.
We have $x(t+1_{\alpha})=y_n(t+1_{\alpha})=G_{\alpha}(y_n(t))
=G_{\alpha}(x(t))$ and $x(t_0)=y_n(t_0)=x_0$.
\end{proof}

\begin{proposition}
\label{alfa.p3}
Suppose that, for the functions $G_{\alpha} \colon M \to M$,
the relations {\rm (\ref{ecalfap2.3})} are satisfied.

Let $t_0 \in \mathbb{Z}^m$ and $\alpha_0 \in \{ 1,2, \ldots, m \}$, fixed.
If, for any $x_0 \in  M$, there exists at most one function
$x \colon \big\{ t \in \mathbb{Z}^m \, \big|\,
t \geq t_0-1_{\alpha_0} \big\} \to M$,
which verifies
\begin{equation}
\label{ecalfap3.1}
x(t+1_\alpha) = G_\alpha (x(t)),
\quad
\forall t \geq t_0-1_{\alpha_0},\,\,
\forall \alpha \in \{1,2, \ldots, m\},
\end{equation}
and the condition $x(t_0)=x_0$,
then $G_{\alpha_0}$ is injective (one-to-one).
\end{proposition}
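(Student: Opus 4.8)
The plan is to prove injectivity of $G_{\alpha_0}$ in contrapositive form: assuming $G_{\alpha_0}(a)=G_{\alpha_0}(b)$ for some $a,b\in M$, I would derive $a=b$. To do this I set $s:=t_0-1_{\alpha_0}$, the least point of the domain $\big\{t\in\mathbb{Z}^m\,\big|\,t\geq t_0-1_{\alpha_0}\big\}$, and manufacture two solutions of the recurrence that differ only in their values $a$ and $b$ at $s$ but are forced to coincide at $t_0$.

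First, because the commutativity relations (\ref{ecalfap2.3}) are assumed to hold, I would invoke Theorem \ref{alfa.t1}, $b)$, with base point $s$: for each prescribed value at $s$ there is a function on $\big\{t\,\big|\,t\geq s\big\}$ satisfying the recurrence (\ref{ecalfap3.1}) for all admissible $t$ and all $\alpha$. Let $x_a$ and $x_b$ be the solutions determined by $x_a(s)=a$ and $x_b(s)=b$, respectively. This is precisely where the standing hypothesis of the proposition is used; it guarantees that the two comparison solutions genuinely exist on the full domain $\big\{t\,\big|\,t\geq t_0-1_{\alpha_0}\big\}$.

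Next, I would evaluate these solutions at $t_0=s+1_{\alpha_0}$ by applying the recurrence (\ref{ecalfap3.1}) once in the direction $\alpha_0$ at the admissible point $t=s$:
$$
x_a(t_0)=x_a(s+1_{\alpha_0})=G_{\alpha_0}\big(x_a(s)\big)=G_{\alpha_0}(a),
$$
and likewise $x_b(t_0)=G_{\alpha_0}(b)$. The assumption $G_{\alpha_0}(a)=G_{\alpha_0}(b)$ therefore forces $x_a(t_0)=x_b(t_0)=:x_0$. Thus $x_a$ and $x_b$ are two functions on $\big\{t\,\big|\,t\geq t_0-1_{\alpha_0}\big\}$ verifying the recurrence (\ref{ecalfap3.1}) and carrying the same value $x_0$ at $t_0$.

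Finally, the \emph{at most one} hypothesis of the proposition applies verbatim and yields $x_a=x_b$; restricting this identity to $t=s$ gives $a=x_a(s)=x_b(s)=b$, which is the desired conclusion, so $G_{\alpha_0}$ is injective. The point deserving care — and the real crux of the argument — is the interplay between two distinct anchoring times: existence of the comparison solutions is supplied by Theorem \ref{alfa.t1} anchored at the boundary point $s$, whereas the uniqueness that collapses them is the hypothesis anchored at the interior point $t_0=s+1_{\alpha_0}$. Keeping these two roles separate is exactly what lets the single application of $G_{\alpha_0}$ between $s$ and $t_0$ carry the injectivity.
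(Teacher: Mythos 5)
Your proof is correct and follows essentially the same route as the paper: the paper also builds the two comparison solutions anchored at $t_0-1_{\alpha_0}$ with initial values $p$ and $q$ (it simply writes them out explicitly via the formula (\ref{ecalfat1.3}) from Theorem \ref{alfa.t1}, $b)$, which is exactly what your appeal to that theorem produces), observes they share the value $G_{\alpha_0}(p)=G_{\alpha_0}(q)$ at $t_0$, and then uses the at-most-one hypothesis to force equality at $t_0-1_{\alpha_0}$. No gaps.
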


\begin{proof}
Let $p, q \in M$ such that $G_{\alpha_0}(p)=G_{\alpha_0}(q)$.

We select $x_0=G_{\alpha_0}(p)=G_{\alpha_0}(q)$.

The functions $$x, y \colon \big\{ t \in \mathbb{Z}^m \, \big|\,
t \geq t_0-1_{\alpha_0} \big\} \to M,$$
\begin{equation}
\label{ecalfap3.3}
x(t)=
G_1^{(t^1-t_0^1)}
\circ
\ldots
\circ
G_{\alpha_0}^{(t^{\alpha_0}-t_0^{\alpha_0}+1)}
\circ
\ldots \circ G_m^{(t^m-t_0^m)}(p),
\quad
\forall
t \geq t_0-1_{\alpha_0},
\end{equation}
\begin{equation}
\label{ecalfap3.4}
y(t)=
G_1^{(t^1-t_0^1)}
\circ
\ldots
\circ
G_{\alpha_0}^{(t^{\alpha_0}-t_0^{\alpha_0}+1)}
\circ
\ldots \circ G_m^{(t^m-t_0^m)}(q),
\quad
\forall
t \geq t_0-1_{\alpha_0},
\end{equation}
are well defined (since $t^{\alpha_0}-t_0^{\alpha_0}+1\geq 0$),
verifies the relations {\rm (\ref{ecalfap3.1})} and
$x(t_0)=G_{\alpha_0}(p)=x_0$,
$y(t_0)=G_{\alpha_0}(q)=x_0$.
It follows that $x(t)=y(t)$,
$\forall t \geq t_0-1_{\alpha_0}$.
For $t = t_0-1_{\alpha_0}$, we obtain
$x(t_0-1_{\alpha_0})=y(t_0-1_{\alpha_0})$,
relation which is equivalent to $p=q$
(according {\rm (\ref{ecalfap3.3})}, {\rm (\ref{ecalfap3.4})}).
Hence, the function $G_{\alpha_0}$ is injective.
\end{proof}

If $G \colon M \to M$ is a bijective function,
we denote $G^{(-k)}=\big(G^{-1}\big)^{(k)}$,
for $k \in \mathbb{N}$; we have
$G^{(-k)}=\big(G^{(k)}\big)^{-1}$.

\begin{proposition}
\label{alfa.p4}
Suppose that the functions $G_{\alpha} \colon M \to M$ are bijective
and the relations {\rm (\ref{ecalfap2.3})} hold. Then, for any
$(t_0,x_0) \in \mathbb{Z}^m \times M$,
there exists a unique solution
$x \colon  \mathbb{Z}^m \to M$,
of the recurrence equation
\begin{equation}
\label{ecalfap4.1}
x(t+1_\alpha) = G_\alpha (x(t)),
\quad
\forall t \in \mathbb{Z}^m,\,\,
\forall \alpha \in \{1,2, \ldots, m\},
\end{equation}
with the condition $x(t_0)=x_0$.
The function $x$ is defined by the relation
\begin{equation}
\label{ecalfap4.2}
x(t)=
G_1^{(t^1-t_0^1)} \circ G_2^{(t^2-t_0^2)}  \circ
\ldots \circ G_m^{(t^m-t_0^m)}(x_0)
\qquad
(\forall t \in \mathbb{Z}^m).
\end{equation}
\end{proposition}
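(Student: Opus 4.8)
The plan is to take the explicit formula (\ref{ecalfap4.2}) as the candidate solution and to argue two things: that it is well defined on all of $\mathbb{Z}^m$ and satisfies the recurrence together with the initial condition (existence), and that every solution on $\mathbb{Z}^m$ must coincide with it (uniqueness). Because the $G_\alpha$ are bijective, the negative powers appearing in (\ref{ecalfap4.2}) make sense through the convention $G_\alpha^{(-k)}=(G_\alpha^{-1})^{(k)}$ introduced before the statement, so, unlike in Theorem \ref{alfa.t1}, the exponents $t^\alpha-t_0^\alpha$ are now allowed to be negative.

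First I would record the algebraic fact that drives everything: the commutation relations (\ref{ecalfap2.3}) propagate to all integer powers. From $G_\alpha\circ G_\beta=G_\beta\circ G_\alpha$ and the bijectivity of $G_\beta$ one gets $G_\alpha\circ G_\beta^{-1}=G_\beta^{-1}\circ G_\alpha$ (compose both sides of the identity by $G_\beta^{-1}$ on the left and right), and iterating yields $G_\alpha^{(k)}\circ G_\beta^{(\ell)}=G_\beta^{(\ell)}\circ G_\alpha^{(k)}$ for all $k,\ell\in\mathbb{Z}$. This is the ingredient that lets me slide any $G_\alpha$ freely through a product like the one in (\ref{ecalfap4.2}), regardless of the signs of the exponents.

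For existence I would then check (\ref{ecalfap4.2}) directly, exactly as in the proof of Theorem \ref{alfa.t1}, $b)$. At $t=t_0$ all exponents vanish and the formula returns $x_0$. For the recurrence, passing from $t$ to $t+1_\alpha$ raises the $\alpha$-th exponent by one, so $x(t+1_\alpha)$ differs from $x(t)$ by one extra factor $G_\alpha$ in the $\alpha$-th slot; using the integer-power commutation I can move that factor to the front and obtain $x(t+1_\alpha)=G_\alpha(x(t))$. The only difference from Theorem \ref{alfa.t1} is that the sliding now also crosses negative powers, which is precisely what the preliminary step guarantees.

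The more delicate part is uniqueness, and here I would reduce to the already proved forward case. Fix an arbitrary solution $x\colon\mathbb{Z}^m\to M$ and an arbitrary $t\in\mathbb{Z}^m$; choose $n\in\mathbb{N}$ large enough that $s:=t_0-n\mathbf{1}$ satisfies $s\leq t$ and $s\leq t_0$. The restriction of $x$ to $\{u \mid u\geq s\}$ solves the recurrence with initial value $a:=x(s)$, so Theorem \ref{alfa.t1}, $b)$, forces $x(u)=G_1^{(u^1-s^1)}\circ\ldots\circ G_m^{(u^m-s^m)}(a)$ for all $u\geq s$. Evaluating at $u=t_0$ gives $x_0=G_1^{(n)}\circ\ldots\circ G_m^{(n)}(a)$, and since this composite is bijective I can invert it; by the integer-power commutation its inverse is $G_1^{(-n)}\circ\ldots\circ G_m^{(-n)}$, so $a=G_1^{(-n)}\circ\ldots\circ G_m^{(-n)}(x_0)$. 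Substituting this value of $a$ into the expression for $x(t)$ and merging the powers slot by slot (again by commutation) collapses the $\pm n$ shifts and reproduces exactly (\ref{ecalfap4.2}). As $t$ was arbitrary, $x$ equals the candidate, which settles uniqueness. I expect the bookkeeping with negative exponents in this last collapsing step to be the main obstacle; everything else is a direct transcription of the bijective commutation identity.
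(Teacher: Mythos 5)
Your proof is correct. The uniqueness half is essentially the paper's own argument: restrict the given solution to a quadrant $\big\{ u \in \mathbb{Z}^m \,\big|\, u \geq s \big\}$ with $s \leq t_0$ and $s \leq t$, invoke Theorem \ref{alfa.t1}, $b)$, evaluate at $t_0$ to solve for $x(s)$ by bijectivity, and collapse the composite; the paper runs over all $s \leq t_0$ and observes that the quadrants cover $\mathbb{Z}^m$, while you fix $s = t_0 - n\mathbf{1}$ for $n$ large, which is the same idea. Where you genuinely diverge is existence. The paper imports it from Proposition \ref{alfa.p2}, $c)$, whose construction chooses a right inverse $H$ of $G = G_1 \circ \cdots \circ G_m$, builds partial solutions $y_n$ on the expanding sets $P_n$ from the backward orbit $a_n = H^{(n)}(x_0)$, checks their compatibility, and glues; that route only needs surjectivity, not injectivity, which is why it is stated at that level of generality, but it does not by itself exhibit the closed form (\ref{ecalfap4.2}) --- the paper recovers the formula only a posteriori through the uniqueness argument. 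You instead verify (\ref{ecalfap4.2}) directly on all of $\mathbb{Z}^m$, after first proving the small but essential lemma that the commutation relations (\ref{ecalfap2.3}) propagate to all integer powers, $G_\alpha^{(k)} \circ G_\beta^{(\ell)} = G_\beta^{(\ell)} \circ G_\alpha^{(k)}$ for $k, \ell \in \mathbb{Z}$, via $G_\alpha \circ G_\beta^{-1} = G_\beta^{-1} \circ G_\alpha$. This is more self-contained for the bijective case and makes the formula the starting point rather than the conclusion, at the cost of not reusing the weaker surjective-only extension machinery. Both arguments are complete; just make sure you state explicitly the identity $G_\alpha^{(k+1)} = G_\alpha \circ G_\alpha^{(k)}$ for negative $k$ as well, since that is the one place where the convention $G^{(-k)} = \big(G^{-1}\big)^{(k)}$ actually has to be unwound.
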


\begin{proof} The existence follows from the
Proposition \ref{alfa.p2}, $c)$.

Let $x \colon  \mathbb{Z}^m \to M$ be a solution
of the recurrence {\rm (\ref{ecalfap4.1})}, with
$x(t_0)=x_0$. For proving the uniqueness, it is sufficient to show that
$x(t)$ verifies the relation {\rm (\ref{ecalfap4.2})},
$\forall t \in \mathbb{Z}^m$.

Let $s\leq t_0$. We apply the Theorem \ref{alfa.t1} for the restriction
of $x$ to the set $\big\{ t \in \mathbb{Z}^m \, \big|\,
t \geq s \big\}$. It follows
\begin{equation*}
x(t)=
G_1^{(t^1-s^1)} \circ G_2^{(t^2-s^2)}  \circ
\ldots \circ G_m^{(t^m-s^m)}(x(s)),
\quad
\forall t \geq s.
\end{equation*}
For $t=t_0$, we obtain
$x_0=
G_1^{(t_0^1-s^1)} \circ G_2^{(t_0^2-s^2)}  \circ
\ldots \circ G_m^{(t_0^m-s^m)}(x(s))$.
Since the functions $G_{\alpha}$ are bijective, it follows
$$
x(s)=G_1^{(s^1-t_0^1)} \circ G_2^{(s^2-t_0^2)}  \circ
\ldots \circ G_m^{(s^m-t_0^m)}(x_0).
$$

Consequently, for any $t\geq s$, we have
\begin{equation*}
x(t)=
G_1^{(t^1-s^1)} \circ G_2^{(t^2-s^2)}  \circ
\ldots \circ G_m^{(t^m-s^m)}(x(s))
=
\end{equation*}
\begin{equation*}
=
G_1^{(t^1-s^1)} \circ G_2^{(t^2-s^2)}  \circ
\ldots \circ G_m^{(t^m-s^m)}  \circ
G_1^{(s^1-t_0^1)} \circ G_2^{(s^2-t_0^2)}  \circ
\ldots \circ G_m^{(s^m-t_0^m)}(x_0)
\end{equation*}
\begin{equation*}
=
G_1^{(t^1-t_0^1)} \circ G_2^{(t^2-t_0^2)}  \circ
\ldots \circ G_m^{(t^m-t_0^m)}(x_0).
\end{equation*}

We have showed that, for any
$s \leq t_0$ and any $t \geq s$, the sequence
$x(t)$ verifies the relation {\rm (\ref{ecalfap4.2})}.
Since
$$\displaystyle
\bigcup_{s \in  \mathbb{Z}^m,\, s \leq t_0}
\big\{ t \in \mathbb{Z}^m \, \big|\,
t \geq s \big\}
=\mathbb{Z}^m,$$ it follows that the relation {\rm (\ref{ecalfap4.2})}
holds for any $t \in \mathbb{Z}^m$.
\end{proof}

\begin{theorem}
\label{alfa.t2}
Let $M$ be a nonvoid set. For each
$\alpha \in \{ 1,2, \ldots, m \}$, we consider the function
$G_{\alpha} \colon M \to M$ and we associate the recurrence equation
\begin{equation}
\label{ecalfat2.1}
x(t+1_\alpha) = G_\alpha (x(t)),
\quad
\forall \alpha \in \{1,2, \ldots, m\}.
\end{equation}

The following statements are equivalent:

\noindent
$i)$ For any
$\alpha\in \{1,2, \ldots, m\}$, the functions
$G_{\alpha}$ are bijective and
\begin{equation}
\label{ecalfat2.2}
G_{\alpha}\circ G_{\beta}
=
G_{\beta} \circ G_{\alpha},
\quad
\forall \alpha, \beta \in \{1,2, \ldots, m\}.
\end{equation}

\noindent
$ii)$ There exists $t_0 \in \mathbb{Z}^m$ such that
$\forall \alpha_0 \in \{ 1,2, \ldots, m \}$,
$\forall x_0 \in  M$,
there exists a unique function
$x \colon \big\{ t \in \mathbb{Z}^m \, \big|\,
t \geq t_0-1_{\alpha_0} \big\} \to M$,
which, for any
$t \geq t_0-1_{\alpha_0}$, verifies the relations
{\rm (\ref{ecalfat2.1})},
and the condition $x(t_0)=x_0$.

\vspace{0.1 cm}
\noindent
$iii)$ There exist $t_0, t_1 \in \mathbb{Z}^m$,
with $t_1^{\alpha} < t_0^{\alpha}$, $\forall \alpha$,
such that, for each $x_0 \in  M$, there exists a unique function
$x \colon \big\{ t \in \mathbb{Z}^m \, \big|\,
t \geq t_1 \big\} \to M$,
which, for each $t \geq t_1$ verifies the relations {\rm (\ref{ecalfat2.1})},
and also the condition $x(t_0)=x_0$.

\vspace{0.1 cm}
\noindent
$iv)$ For each $t_0, t_1 \in \mathbb{Z}^m$,
with $t_1 \leq t_0$, and for any $x_0 \in  M$,
there exists a unique function
$x \colon \big\{ t \in \mathbb{Z}^m \, \big|\,
t \geq t_1 \big\} \to M$,
which, for any $t \geq t_1$ verifies the relations
{\rm (\ref{ecalfat2.1})},
and also the condition $x(t_0)=x_0$.

\vspace{0.1 cm}
\noindent
$v)$ There exists $t_0\in \mathbb{Z}^m$, such that,
for any $x_0 \in  M$, there exist a unique function
$x \colon \mathbb{Z}^m \to M$,
which, for any
$t \in \mathbb{Z}^m $ verifies the relations
{\rm (\ref{ecalfat2.1})},
and $x(t_0)=x_0$.

\vspace{0.1 cm}
\noindent
$vi)$ For each pair
$(t_0,x_0) \in \mathbb{Z}^m \times M$,
there exists a unique function $x \colon \mathbb{Z}^m \to M$,
which, for any $t \in \mathbb{Z}^m $ verifies the relations
{\rm (\ref{ecalfat2.1})}, and $x(t_0)=x_0$.
\end{theorem}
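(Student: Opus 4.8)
The plan is to route every equivalence through statement $i)$, establishing $i)\Leftrightarrow ii)$, $i)\Leftrightarrow iii)\Leftrightarrow iv)$ and $i)\Leftrightarrow v)\Leftrightarrow vi)$; this deliberately avoids comparing two ``existence and uniqueness'' statements whose solution domains differ, which is where the subtlety lies. Two of the needed arrows are harmless specializations: $vi)\Rightarrow v)$ passes from ``for every $(t_0,x_0)$'' to ``there exists $t_0$, for every $x_0$'', and $iv)\Rightarrow iii)$ is obtained by choosing $t_1=t_0-{\bf 1}$, which satisfies $t_1^{\alpha}<t_0^{\alpha}$ for all $\alpha$. It then remains to prove the arrows leaving $i)$ and the arrows returning to $i)$.

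For the direction out of $i)$, the implication $i)\Rightarrow vi)$ is exactly Proposition \ref{alfa.p4}. For $i)\Rightarrow iv)$ and $i)\Rightarrow ii)$ I would obtain existence by restricting the global solution of Proposition \ref{alfa.p4} to the relevant subdomain ($\{t\geq t_1\}$, respectively $\{t\geq t_0-1_{\alpha_0}\}$). For uniqueness, let $s$ be the bottom corner of that subdomain; by Theorem \ref{alfa.t1} $b)$ every solution on $\{t\geq s\}$ equals the explicit composition \eqref{ecalfat1.3} evaluated at its own value $x(s)$, so it is entirely determined by $x(s)$. Since the $G_{\alpha}$ are bijective, the constraint $x(t_0)=x_0$ pins down $x(s)$ uniquely by inverting the finitely many factors $G_{\alpha}^{(t_0^{\alpha}-s^{\alpha})}$, which yields uniqueness on the subdomain.

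For the direction returning to $i)$, observe that in each of $ii)$, $iii)$, $v)$ the hypothesized domain contains a set $\{t\geq t_0-1_{\alpha_0}\}$ carrying a solution with datum $x(t_0)=x_0$ for every $x_0$; restricting and invoking Proposition \ref{alfa.p2} $a)$ (for each $\alpha_0$) shows that every $G_{\alpha_0}$ is surjective and that the commutativity relations \eqref{ecalfat2.2} hold. In case $ii)$ the uniqueness hypothesis is posed on exactly $\{t\geq t_0-1_{\alpha_0}\}$, so Proposition \ref{alfa.p3} applies verbatim and delivers injectivity of each $G_{\alpha_0}$, hence bijectivity and $i)$.

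The main obstacle is injectivity in cases $iii)$ and $v)$, where uniqueness is known only on the strictly larger domains $\{t\geq t_1\}$ and $\mathbb{Z}^m$, so Proposition \ref{alfa.p3} cannot be applied directly. I would resolve this by a colliding-solutions argument. Suppose $G_{\alpha_0}(p)=G_{\alpha_0}(q)=:c$. Using the surjectivity and commutativity just established, Proposition \ref{alfa.p2} $c)$ produces two solutions defined on all of $\mathbb{Z}^m$ taking the values $p$ and $q$ at the point $t_0-1_{\alpha_0}$; applying the recurrence once in direction $\alpha_0$ shows that both take the value $c$ at $t_0$. Restricting to $\{t\geq t_1\}$ (for $iii)$, where $t_0-1_{\alpha_0}\geq t_1$) or keeping them on $\mathbb{Z}^m$ (for $v)$) gives two solutions with the common datum $c$ at $t_0$, so the uniqueness hypothesis forces them to coincide; evaluating at $t_0-1_{\alpha_0}$ yields $p=q$. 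Thus every $G_{\alpha_0}$ is injective, hence bijective, and $i)$ holds. Combining the outgoing arrows, the returning arrows, and the two specializations closes all the equivalences.
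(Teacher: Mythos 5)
Your proposal is correct, and it reorganizes the paper's argument rather than merely reproducing it. The arrows returning to $i)$ coincide with the paper's: $ii)\Rightarrow i)$ via Proposition \ref{alfa.p2}, $a)$ together with Proposition \ref{alfa.p3}, and for $iii)\Rightarrow i)$ and $v)\Rightarrow i)$ the same colliding-solutions device (two global solutions supplied by Proposition \ref{alfa.p2}, $c)$, forced to agree at $t_0$, hence everywhere on the domain of uniqueness, hence at $t_0-1_{\alpha_0}$), including the key observation that $t_1^{\alpha}<t_0^{\alpha}$ is exactly what places $t_0-1_{\alpha_0}$ inside $\{t\geq t_1\}$. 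Where you genuinely diverge is in the decomposition and in the forward arrows. The paper runs the cycle $ii)\Rightarrow i)\Rightarrow vi)\Rightarrow iv)\Rightarrow ii)$, and its one delicate arrow is $vi)\Rightarrow iv)$: at that point only surjectivity and commutativity (not injectivity) have been extracted from $vi)$, so uniqueness on $\{t\geq t_1\}$ is proved by extending a competitor solution to all of $\mathbb{Z}^m$ via Proposition \ref{alfa.p2}, $c)$, identifying it with the competitor by Theorem \ref{alfa.t1}, and then invoking global uniqueness. You instead make $i)$ the hub and prove $i)\Rightarrow ii)$ and $i)\Rightarrow iv)$ directly with full bijectivity in hand: by Theorem \ref{alfa.t1}, $b)$ any solution on $\{t\geq s\}$ equals the explicit composition {\rm (\ref{ecalfat1.3})} applied to $x(s)$, and the constraint $x(t_0)=x_0$ determines $x(s)$ by inverting the bijection $G_1^{(t_0^1-s^1)}\circ\cdots\circ G_m^{(t_0^m-s^m)}$. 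This is simpler and avoids the extension trick, at the price of closing each loop back to $i)$ separately, whereas the paper settles $ii)$, $iv)$, $vi)$ in a single cycle; both routes are complete, and every auxiliary result you cite is applied within its stated hypotheses.
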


\begin{proof}
$ii)\Longrightarrow i)$: The relations {\rm (\ref{ecalfat2.2})}
and the surjectivity of functions $G_{\alpha}$
follow from the Proposition \ref{alfa.p2}, $a)$, and the
injectivity of the functions $G_{\alpha}$
follow from the Proposition \ref{alfa.p3}.

$i)\Longrightarrow vi)$: It follows from the Proposition \ref{alfa.p4}.

$vi)\Longrightarrow iv)$: Considering the restrictions of the functions $x$
to $\big\{ t \in \mathbb{Z}^m \, \big|\, t \geq t_0-1_{\alpha_0} \big\}$ (for each $\alpha_0$),
from the Proposition \ref{alfa.p2}, $a)$, it follows that the relations {\rm (\ref{ecalfat2.2})}
hold and that the functions $G_{\alpha}$ are surjective, $\forall \alpha$.

Let $t_0, t_1 \in \mathbb{Z}^m$, with $t_1 \leq t_0$, and $x_0 \in  M$.
There exists a unique function $\tilde{x} \colon \mathbb{Z}^m \to M$
such that $\tilde{x}(t_0)=x_0$ and the relations {\rm (\ref{ecalfat2.1})} are true,
$\forall t \in \mathbb{Z}^m $.

To prove the existence, it is sufficient to select $x$ as the restriction of
$\tilde{x}$ to $ \big\{ t \in \mathbb{Z}^m \, \big|\, t \geq t_1 \big\}$.

Let $y \colon \big\{ t \in \mathbb{Z}^m \, \big|\,
t \geq t_1 \big\} \to M$, be a function such that
$y(t_0)=x_0$ and for which the relations {\rm (\ref{ecalfat2.1})} hold,
$\forall t \geq t_1$. We shall prove that the functions $x$ and $y$ are equal.

From the Proposition \ref{alfa.p2}, $c)$, there exists
$\tilde{y} \colon  \mathbb{Z}^m  \to M$
such that $\tilde{y}(t_1)=y(t_1)$ and for which the relations
{\rm (\ref{ecalfat2.1})} hold,
$\forall t \in \mathbb{Z}^m $.
From the Theorem \ref{alfa.t1}, it follows that $y$
and its restriction $\tilde{y}$
to $\big\{ t \in \mathbb{Z}^m \, \big|\,
t \geq t_1 \big\}$ coincide. Since
$t_0 \geq t_1$, we have
$\tilde{y}(t_0)=y(t_0)=x_0$.
It follows that the functions $\tilde{x}$ and $\tilde{y}$ coincide.
Consequently, for each $t \geq t_1$, we have
\begin{equation*}
y(t)=\tilde{y}(t)=\tilde{x}(t)=x(t).
\end{equation*}

$iv)\Longrightarrow ii)$ is an obvious implication.

We have proved that the statements $i)$, $ii)$, $iv)$, $vi)$ are equivalent.

$i)\Longrightarrow iii)$: We have $i) \Longleftrightarrow iv)$,
and $iv)\Longrightarrow iii)$ is obvious.

$iii)\Longrightarrow i)$: For each $\alpha$,
we have $t_1^{\alpha} < t_0^{\alpha}$, i.e.,
$t_0^{\alpha}-1 \geq t_1^{\alpha}$.
Hence $t_0 - 1_{\alpha} \geq t_1$, $\forall \alpha$.
Considering the restrictions of the functions $x$
to $\big\{ t \in \mathbb{Z}^m \, \big|\,
t \geq t_0-1_{\alpha_0} \big\}$ (for each $\alpha_0$),
from the Proposition \ref{alfa.p2}, $a)$,
it follows that the relations {\rm (\ref{ecalfat2.2})}
are true and the functions $G_{\alpha}$ are surjective,
$\forall \alpha$.

Let $\alpha_0 \in \{1,2, \ldots, m\}$.
We shall prove that $G_{\alpha_0}$ is injective.

Let $p, q \in M$ such that $G_{\alpha_0}(p)=G_{\alpha_0}(q)$.
According the Proposition \ref{alfa.p2}, $c)$,
there exist the functions
$y, z \colon  \mathbb{Z}^m  \to M$
for which the relations {\rm (\ref{ecalfat2.1})} hold,
$\forall t \in \mathbb{Z}^m$,
and
$y(t_0-1_{\alpha_0})=p$,
$z(t_0-1_{\alpha_0})=q$.
Let $x_0=G_{\alpha_0}(p)=G_{\alpha_0}(q)$.
\begin{equation*}
y(t_0)=G_{\alpha_0}(y(t_0-1_{\alpha_0}))
=G_{\alpha_0}(p)=x_0,
\end{equation*}
\begin{equation*}
z(t_0)=G_{\alpha_0}(z(t_0-1_{\alpha_0}))
=G_{\alpha_0}(q)=x_0.
\end{equation*}
Applying the uniqueness property for the restrictions of the functions
$y$ and $z$ to the set
$\big\{ t \in \mathbb{Z}^m \, \big|\,
t \geq t_1 \big\}$, we obtain
$y(t)=z(t)$, $\forall t \geq t_1$.

Since $t_0 - 1_{\alpha_0} \geq t_1$, it follows
$y(t_0-1_{\alpha_0})=z(t_0-1_{\alpha_0})$, i.e.,
$p=q$.

$i)\Longrightarrow v)$: We have $i) \Longleftrightarrow vi)$,
and $vi)\Longrightarrow v)$ is obvious.

$v)\Longrightarrow i)$:
For each $\alpha$, we have $t_1^{\alpha} < t_0^{\alpha}$, i.e.,
$t_0^{\alpha}-1 \geq t_1^{\alpha}$.
Hence $t_0 - 1_{\alpha} \geq t_1$, $\forall \alpha$.
Considering the restrictions of the functions $x$
to $\big\{ t \in \mathbb{Z}^m \, \big|\,
t \geq t_0-1_{\alpha_0} \big\}$ (for each $\alpha_0$),
by the Proposition \ref{alfa.p2}, $a)$,
it follows that the relations {\rm (\ref{ecalfat2.2})}
hold and that the functions $G_{\alpha}$ are surjective,
$\forall \alpha$.

Let $\alpha_0 \in \{1,2, \ldots, m\}$.
We shall prove that $G_{\alpha_0}$ is injective.

Let $p, q \in M$ such that $G_{\alpha_0}(p)=G_{\alpha_0}(q)$.
According the Proposition \ref{alfa.p2}, $c)$, there exist the functions
$y, z \colon  \mathbb{Z}^m  \to M$ for which the relations
{\rm (\ref{ecalfat2.1})} are true, $\forall t \in \mathbb{Z}^m$,
and
$y(t_0-1_{\alpha_0})=p$,
$z(t_0-1_{\alpha_0})=q$.
Let $x_0=G_{\alpha_0}(p)=G_{\alpha_0}(q)$.
\begin{equation*}
y(t_0)=G_{\alpha_0}(y(t_0-1_{\alpha_0}))
=G_{\alpha_0}(p)=x_0,
\end{equation*}
\begin{equation*}
z(t_0)=G_{\alpha_0}(z(t_0-1_{\alpha_0}))
=G_{\alpha_0}(q)=x_0.
\end{equation*}

From uniqueness, we obtain
$y(t)=z(t)$, $\forall t \in \mathbb{Z}^m $;
for $t=t_0 - 1_{\alpha_0}$ it follows
$y(t_0-1_{\alpha_0})=z(t_0-1_{\alpha_0})$, i.e.,
$p=q$.
\end{proof}

\begin{remark}
\label{alfa.o1}
We consider the bijective functions
$G_{\alpha} \colon M \to M$, for which
the relations {\rm (\ref{ecalfat2.2})} holds.
Let $t_0 \in \mathbb{Z}^m$
and let $\tilde{x} \colon
\big\{ t \in \mathbb{Z}^m \, \big|\,
t \geq t_0 \big\} \to M$ be a solution of the recurrence {\rm (\ref{ecalfat2.1})}.
From Theorem {\rm \ref{alfa.t2}}, it follows that there exists a unique function
$x \colon \mathbb{Z}^m \to M$,
solution of the recurrence {\rm (\ref{ecalfat2.1})}, such that
$x(t)=\tilde{x}(t)$, $\forall t\geq t_0$,
i.e., $x$ is an extension of $\tilde{x}$.
According Proposition {\rm \ref{alfa.p4}},
this extension is defined by the formula {\rm (\ref{ecalfap4.2})},
for $x_0=\tilde{x}(t_0)$.
\end{remark}

\section{Non-autonomous discrete multitime \\multiple recurrence}

\subsection{Existence and uniqueness Theorem}

Let $t_1 \in \mathbb{Z}^m$. Consider the functions
$F_\alpha \colon \big\{ t \in \mathbb{Z}^m \, \big|\,
t \geq t_1 \big\} \times M \to M$,
$\alpha \in \{ 1,2, \ldots, m \}$,
which define the recurrence equation
\begin{equation}
\label{ec.alfa1}
x(t+1_\alpha) = F_\alpha (t,x(t)),
\quad
\forall \alpha \in \{1,2, \ldots, m\}.
\end{equation}

Let $\widetilde{M}=
\big\{ s \in \mathbb{Z}^m \, \big|\,
s \geq t_1 \big\} \times M$ and let
$G_\alpha \colon \widetilde{M} \to \widetilde{M}$,
\begin{equation*}
G_\alpha (s,x)=
\big( s+1_\alpha, F_\alpha (s,x) \big),
\quad
\forall (s,x) \in \widetilde{M}.
\end{equation*}
The functions $G_\alpha$
define the recurrence
\begin{equation}
\label{ec.alfa2}
\big(  s(t+1_\alpha),x(t+1_\alpha)  \big)
=
\big( s(t)+1_\alpha, F_\alpha (s(t),x(t)) \big),
\quad
\forall \alpha \in \{1,2, \ldots, m\},
\end{equation}
which is equivalent to
\begin{equation}
\label{ec.alfa3}
  \begin{cases}
    x(t+1_\alpha)=F_\alpha (s(t),x(t)) \\
    s(t+1_\alpha)=s(t)+1_\alpha
  \end{cases}
\!\!\!\!\!\!,
\quad
\forall \alpha \in \{1,2, \ldots, m\}.
\end{equation}
The unknown function is
$\big(  s(\cdot), x(\cdot)  \big)$.
Denoting $y=(s,x)$, the recurrence
(\ref{ec.alfa2}) can be rewritten in the form
\begin{equation}
\label{ec.alfa4}
y(t+1_\alpha) = G_\alpha (y(t)),
\quad
\forall \alpha \in \{1,2, \ldots, m\},
\end{equation}
with the unknown function
$y(\cdot)=\big(  s(\cdot), x(\cdot)  \big)$.

\begin{lemma}
\label{alfa.l2}
$a)$ Let $t_0, t_1, s_0 \in \mathbb{Z}^m$,
with $t_0\geq t_1$.

Then the function
$s \colon \big\{ t \in \mathbb{Z}^m \, \big|\,
t \geq t_1 \big\} \to \mathbb{Z}^m$ verifies, for each $t \geq t_1$,
\begin{equation}
\label{ecalfal2.1}
s(t+1_\alpha)=s(t)+1_\alpha,
\quad
\forall \alpha \in \{1,2, \ldots, m\},
\end{equation}
\begin{equation*}
\mbox{and the condition}\,\,\,
s(t_0)=s_0,
\end{equation*}
if and only if
$s(t)=t-t_0+s_0$, $\forall t \geq t_1$.

$b)$ Let $t_0,  s_0 \in \mathbb{Z}^m$. The function
$s \colon \mathbb{Z}^m  \to \mathbb{Z}^m$ verifies,
for each $t \in \mathbb{Z}^m$, the relations {\rm (\ref{ecalfal2.1})}
and the condition $s(t_0)=s_0$ if and only if
$s(t)=t-t_0+s_0$, $\forall t  \in \mathbb{Z}^m$.
\end{lemma}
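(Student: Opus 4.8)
The plan is to recognize that the recurrence (\ref{ecalfal2.1}) is exactly the autonomous recurrence (\ref{ecalfat1.1}) associated with the translation maps $G_\alpha \colon \mathbb{Z}^m \to \mathbb{Z}^m$, $G_\alpha(s) = s + 1_\alpha$, so that the machinery of the previous section applies verbatim. First I would record the two structural facts these maps enjoy: each $G_\alpha$ is bijective, its inverse being $s \mapsto s - 1_\alpha$, and any two of them commute, since $G_\alpha \circ G_\beta(s) = s + 1_\alpha + 1_\beta = G_\beta \circ G_\alpha(s)$; thus the commutation relations (\ref{ecalfap2.3}) hold. Moreover $G_\alpha^{(k)}(s) = s + k\,1_\alpha$, whence
\begin{equation*}
G_1^{(a_1)} \circ G_2^{(a_2)} \circ \ldots \circ G_m^{(a_m)}(s) = s + (a_1, a_2, \ldots, a_m),
\end{equation*}
for all integers $a_1, \ldots, a_m$. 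This identity is what converts the abstract composition formulas of the preceding results into the explicit affine expression $t - t_0 + s_0$.

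For part $b)$, since the $G_\alpha$ are commuting bijections I would simply invoke Proposition \ref{alfa.p4}: for the given $(t_0, s_0) \in \mathbb{Z}^m \times \mathbb{Z}^m$ there is a unique solution of (\ref{ecalfal2.1}) on all of $\mathbb{Z}^m$ with $s(t_0) = s_0$, and by (\ref{ecalfap4.2}) together with the displayed identity it equals
\begin{equation*}
s(t) = G_1^{(t^1 - t_0^1)} \circ \ldots \circ G_m^{(t^m - t_0^m)}(s_0) = s_0 + (t - t_0) = t - t_0 + s_0.
\end{equation*}
Both implications of the stated equivalence are immediate: the ``only if'' is the uniqueness clause, and the ``if'' is the fact that this very $s$ is the function produced, hence a solution.

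For part $a)$, where the domain is $\{t \geq t_1\}$ and the initial condition sits at the possibly interior point $t_0 \geq t_1$, the ``if'' direction is a one-line verification that $s(t) = t - t_0 + s_0$ satisfies $s(t+1_\alpha) = s(t) + 1_\alpha$ and $s(t_0) = s_0$. For the ``only if'' direction the subtlety is that $t_0$ need not be the corner $t_1$ of the domain, so Theorem \ref{alfa.t1}$b)$, which expresses a solution through its value at the corner, cannot be applied with base point $t_0$ directly, and an arbitrary $\tau \geq t_1$ may fail to satisfy $\tau \geq t_0$. I would resolve this by descending to a common lower corner: given $\tau \geq t_1$, set $\sigma^\alpha = \min\{t_0^\alpha, \tau^\alpha\}$, so that $t_1 \leq \sigma \leq t_0$ and $\sigma \leq \tau$. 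Restricting $s$ to $\{u \geq \sigma\}$ and applying Theorem \ref{alfa.t1}$b)$, with (\ref{ecalfap2.3}) available, yields $s(u) = u - \sigma + s(\sigma)$ for all $u \geq \sigma$. Evaluating at $u = t_0$ gives $s(\sigma) = s_0 - t_0 + \sigma$, and then evaluating at $u = \tau$ gives $s(\tau) = \tau - \sigma + s(\sigma) = \tau - t_0 + s_0$; since $\tau \geq t_1$ is arbitrary, the formula holds throughout the domain.

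The main obstacle is precisely this off-corner initial condition in part $a)$: without it both parts would reduce to a direct citation of Proposition \ref{alfa.p4} or Theorem \ref{alfa.t1}. The common-corner argument is the only genuine step, and it succeeds because the translations $G_\alpha$ are bijective, which is what rigidly determines the solution even at lattice points lying below $t_0$ in some coordinates.
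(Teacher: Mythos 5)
Your proposal is correct, and for part $a)$ it takes a genuinely different route from the paper. For part $b)$ you and the paper do essentially the same thing: you cite Proposition \ref{alfa.p4} directly, the paper cites Theorem \ref{alfa.t2}, $vi)$, and these are interchangeable since $vi)$ is deduced from Proposition \ref{alfa.p4} in the first place. For part $a)$, however, the paper simply checks that the translations $\widetilde{G}_\alpha(s)=s+1_\alpha$ are commuting bijections and then invokes the off-corner existence-and-uniqueness statement of Theorem \ref{alfa.t2}, $iv)$, whereas you avoid that heavier result entirely: your common-corner device (taking $\sigma^\alpha=\min\{t_0^\alpha,\tau^\alpha\}$, so that $t_1\leq\sigma\leq t_0$ and $\sigma\leq\tau$, and applying the corner-based uniqueness of Theorem \ref{alfa.t1}, $b)$ on $\{u\geq\sigma\}$) pins down $s(\sigma)$ from the value at $t_0$ and then propagates it to $\tau$. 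This is sound, and it is worth noting that your argument for $a)$ in fact never uses bijectivity of the $\widetilde{G}_\alpha$ as abstract maps --- the step $s(t_0)=t_0-\sigma+s(\sigma)\Rightarrow s(\sigma)=s_0-t_0+\sigma$ is just solving a translation equation in $\mathbb{Z}^m$ --- so your closing remark slightly overstates what is needed; the paper's route, by contrast, genuinely requires bijectivity because Theorem \ref{alfa.t2}, $iv)$ is equivalent to it. What the paper's approach buys is brevity (one citation); what yours buys is a more elementary, self-contained derivation resting only on the basic Theorem \ref{alfa.t1}. Neither creates a circularity, since none of Theorem \ref{alfa.t1}, Proposition \ref{alfa.p4}, or Theorem \ref{alfa.t2} depends on Lemma \ref{alfa.l2}.
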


\begin{proof}
Let $\tilde{s} \colon \mathbb{Z}^m  \to \mathbb{Z}^m$,
$\tilde{s}(t)=t-t_0+s_0$, $\forall t  \in \mathbb{Z}^m$.
One observes immediately that $\tilde{s}$ verifies,
for any $t \in \mathbb{Z}^m$, the relations {\rm (\ref{ecalfal2.1})}
and $\tilde{s}(t_0)=s_0$.

For each $\alpha$, we consider the function
\begin{equation*}
\widetilde{G}_\alpha \colon \mathbb{Z}^m \to \mathbb{Z}^m,
\quad
\widetilde{G}_\alpha(s)=s+1_\alpha,
\,\,
\forall s \in \mathbb{Z}^m.
\end{equation*}

The relations {\rm (\ref{ecalfal2.1})} are equivalent to
$s(t+1_\alpha) = \widetilde{G}_\alpha (s(t))$,\,
$\forall \alpha \in \{1,2, \ldots, m\}$.

One observes that
$\widetilde{G}_\alpha\circ \widetilde{G}_\beta(s)
=
\widetilde{G}_\beta\circ \widetilde{G}_\alpha(s)
=s+1_\alpha+1_\beta$,\,
$\forall s \in \mathbb{Z}^m$.

For any $\alpha$, the function
$\widetilde{G}_\alpha$ is bijective. Its
inverse is
$(\widetilde{G}_\alpha)^{-1}(s)=s-1_\alpha$,
$\forall s \in \mathbb{Z}^m$.

According the Theorem \ref{alfa.t2}, $iv)$,
there exists a unique function
$s \colon \big\{ t \in \mathbb{Z}^m \, \big|\,
t \geq t_1 \big\} \to \mathbb{Z}^m$
which verifies the recurrence (\ref{ecalfal2.1}), $\forall t\geq t_1$, and
the condition $s(t_0)=t_0$. By uniqueness, it follows that $s$
coincides with the restriction of the function $\tilde{s}$ to the set
$\big\{ t \in \mathbb{Z}^m \, \big|\,
t \geq t_1 \big\}$;
hence $s(t)=t-t_0+s_0$, $\forall t\geq t_1$.

According the Theorem \ref{alfa.t2}, $vi)$, it follows that there exists a unique function
$\sigma \colon  \mathbb{Z}^m \to \mathbb{Z}^m$
which verifies the recurrence
(\ref{ecalfal2.1}), $\forall t \in \mathbb{Z}^m$, and the condition
$\sigma(t_0)=t_0$.
From uniqueness, it follows that $\sigma=\tilde{s}$;
hence $\sigma(t)=t-t_0+s_0$, $\forall t \in \mathbb{Z}^m$.
\end{proof}


\begin{proposition}
\label{alfa.p5}
Suppose that we are in the foregoing conditions.

\noindent
$a)$ For $\alpha, \beta \in \{1,2, \ldots, m\}$, we have
\begin{equation}
\label{ecalfap5.1}
F_\alpha(t+1_\beta,F_\beta (t,x))
=
F_\beta(t+1_\alpha,F_\alpha (t,x)),
\quad
\quad
\forall t \geq t_1,\,\,
\forall x \in M
\end{equation}
if and only if
$G_{\alpha}\circ G_{\beta}
=
G_{\beta} \circ G_{\alpha}$.

\noindent
$b)$ Let $t_0 \in \mathbb{Z}^m$, $t_0 \geq t_1$ and $x_0 \in M$.

If $x \colon \big\{ t \in \mathbb{Z}^m \, \big|\,
t \geq t_1 \big\} \to M$ verifies the recurrence
{\rm (\ref{ec.alfa1})}, $\forall t \geq t_1$,
and the condition $x(t_0)=x_0$, then the function
\begin{equation*}
y \colon \big\{ t \in \mathbb{Z}^m \, \big|\,
t \geq t_1 \big\} \to \widetilde{M},
\quad
y(t)=(t, x(t)),
\quad
\forall t\geq t_1,
\end{equation*}
verifies the recurrence
{\rm (\ref{ec.alfa4})}, $\forall t \geq t_1$,
and the condition $y(t_0)=(t_0,x_0)$.

Conversely, if
$y(\cdot)= \big( s(\cdot), x(\cdot) \big)
\colon \big\{ t \in \mathbb{Z}^m \, \big|\,
t \geq t_1 \big\} \to \widetilde{M}$
verifies the recurrence
{\rm (\ref{ec.alfa4})}, $\forall t \geq t_1$,
and the condition $y(t_0)=(t_0,x_0)$, then
$s(t)=t, \quad \forall t \geq t_1$
and $x(\cdot)$ verifies the recurrence
{\rm (\ref{ec.alfa1})}, $\forall t \geq t_1$,
and the condition $x(t_0)=x_0$.
\end{proposition}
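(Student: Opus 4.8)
The plan is to reduce everything to direct computation with the definition $G_\alpha(s,x) = (s+1_\alpha, F_\alpha(s,x))$, the only substantive input being Lemma \ref{alfa.l2} in the converse part of $b)$.

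For part $a)$, I would simply compute both composites on an arbitrary $(s,x) \in \widetilde{M}$. We have
$$G_\alpha \circ G_\beta(s,x) = \big(s + 1_\beta + 1_\alpha,\, F_\alpha(s+1_\beta, F_\beta(s,x))\big),$$
$$G_\beta \circ G_\alpha(s,x) = \big(s + 1_\alpha + 1_\beta,\, F_\beta(s+1_\alpha, F_\alpha(s,x))\big).$$
Since the first components always coincide (because $1_\alpha + 1_\beta = 1_\beta + 1_\alpha$), the equality $G_\alpha \circ G_\beta = G_\beta \circ G_\alpha$ is equivalent to the equality of the second components for every $(s,x) \in \widetilde{M}$, which is precisely relation (\ref{ecalfap5.1}) (with $s$ renamed $t$, the constraint $s\geq t_1$ matching $t\geq t_1$). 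This settles $a)$ with no obstacle.

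For the forward direction of $b)$, assuming $x(\cdot)$ satisfies (\ref{ec.alfa1}) with $x(t_0) = x_0$, I would set $y(t) = (t, x(t))$ and substitute directly: using (\ref{ec.alfa1}),
$$y(t+1_\alpha) = \big(t+1_\alpha,\, x(t+1_\alpha)\big) = \big(t+1_\alpha,\, F_\alpha(t, x(t))\big) = G_\alpha(t, x(t)) = G_\alpha(y(t)),$$
so $y$ solves (\ref{ec.alfa4}), and the initial condition $y(t_0) = (t_0, x_0)$ is immediate. This direction is routine.

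The only delicate point lies in the converse of $b)$. Starting from a solution $y(\cdot) = \big(s(\cdot), x(\cdot)\big)$ of (\ref{ec.alfa4}) with $y(t_0) = (t_0, x_0)$, reading (\ref{ec.alfa4}) componentwise as in (\ref{ec.alfa3}) gives $s(t+1_\alpha) = s(t) + 1_\alpha$ together with $x(t+1_\alpha) = F_\alpha(s(t), x(t))$, while the initial condition yields $s(t_0) = t_0$ and $x(t_0) = x_0$. The key step, and the one that is not a mere substitution, is to pin down the first component: the function $s$ satisfies the shift recurrence (\ref{ecalfal2.1}) with $s(t_0) = t_0$, so Lemma \ref{alfa.l2}, $a)$ (applied with $s_0 = t_0$) forces $s(t) = t$ for all $t \geq t_1$. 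Once this is established, the relation $x(t+1_\alpha) = F_\alpha(s(t), x(t))$ collapses to $x(t+1_\alpha) = F_\alpha(t, x(t))$, i.e. $x(\cdot)$ solves the non-autonomous recurrence (\ref{ec.alfa1}), and $x(t_0) = x_0$ holds by construction. Thus the main obstacle is simply recognizing that the auxiliary coordinate $s$ must coincide with the identity map $t \mapsto t$, which is exactly what Lemma \ref{alfa.l2} supplies.
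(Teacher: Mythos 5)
Your proposal is correct and follows essentially the same route as the paper: part $a)$ by componentwise comparison of the two composites, the forward direction of $b)$ by direct substitution, and the converse by invoking Lemma \ref{alfa.l2} with $s_0=t_0$ to force $s(t)=t$. No gaps.
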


\begin{proof}
$a)$ For any $(s,x) \in \widetilde{M}$, we have
\begin{equation*}
G_{\alpha}\circ G_{\beta} (s,x)=
G_{\beta} \circ G_{\alpha}(s,x)
\Longleftrightarrow
\end{equation*}
\begin{equation*}
\Longleftrightarrow
\big( s+1_\beta+1_\alpha, F_\alpha(s+1_\beta,F_\beta (s,x)) \big)
=
\big( s+1_\alpha+1_\beta, F_\beta(s+1_\alpha,F_\alpha (s,x)) \big)
\end{equation*}
\begin{equation*}
\Longleftrightarrow
F_\alpha(s+1_\beta,F_\beta (s,x))
=
F_\beta(s+1_\alpha,F_\alpha (s,x))
\end{equation*}

$b)$ Let $x(\cdot)$ be a solution of the recurrence {\rm (\ref{ec.alfa1})},
with $x(t_0)=x_0$. We have to show that the function
$y(t)=(t, x(t))$ verifies the relations {\rm (\ref{ec.alfa3})};
since, for that $y(\cdot)$ we have $s(t)=t$,
the relations {\rm (\ref{ec.alfa3})} become
\begin{equation}
\label{ecalfap5.2}
  \begin{cases}
    x(t+1_\alpha)=F_\alpha (t,x(t)) \\
   t+1_\alpha=t+1_\alpha
  \end{cases}
\!\!\!\!\!\!,
\quad
\forall \alpha \in \{1,2, \ldots, m\}.
\end{equation}
The second relation in (\ref{ecalfap5.2}) is obvious,
and the first is true because $x(\cdot)$ is a solution of the recurrence {\rm (\ref{ec.alfa1})}.

The relation $y(t_0)=(t_0,x_0)$ is obvious.

Conversely, let $y(\cdot)= \big( s(\cdot), x(\cdot) \big)$
be a solution of the recurrence {\rm (\ref{ec.alfa4})},
with $y(t_0)=(t_0,x_0)$. Hence $s(\cdot)$ and $x(\cdot)$ verifies the relations
{\rm (\ref{ec.alfa3})} and the condition $s(t_0)=t_0$,
$x(t_0)=x_0$.

Since $s(t+1_\alpha)=s(t)+1_\alpha$,\,
$\forall t \geq t_1$, $\forall \alpha$,
and $s(t_0)=t_0$, from Lemma \ref{alfa.l2}
it follows that $s(t)=t$, $\forall t\geq t_1$.

Hence, the first relation in {\rm (\ref{ec.alfa3})} becomes
$x(t+1_\alpha) = F_\alpha (t,x(t))$,\,
i.e., $x(\cdot)$ is solution of the recurrence {\rm (\ref{ec.alfa1})}.
\end{proof}

The Proposition \ref{alfa.p1} presents necessary conditions for the existence
of solutions of a discrete multitime multiple recurrence.
The next Theorem shows that these conditions are also sufficient
for the existence and uniqueness of solutions.

\begin{theorem}
\label{alfa.t3}
Let $M$ be an arbitrary nonvoid set and $t_0 \in \mathbb{Z}^m$.
We consider the functions
$F_\alpha \colon \big\{ t \in \mathbb{Z}^m \, \big|\,
t \geq t_0 \big\} \times M \to M$,
$\alpha \in \{ 1,2, \ldots, m \}$,
such that
\begin{equation}
\label{alfat3.1}
F_\alpha(t+1_\beta,F_\beta (t,x))
=
F_\beta(t+1_\alpha,F_\alpha (t,x)),
\end{equation}
$$
\forall t \geq t_0,\,\,
\forall x \in M,
\quad
\forall \alpha, \beta \in \{1,2, \ldots, m\}.
$$
\noindent
Then, for any $x_0 \in  M$, there exists a unique function
$x \colon \big\{ t \in \mathbb{Z}^m \, \big|\,
t \geq t_0 \big\} \to M$ which verifies
\begin{equation}
\label{alfat3.2}
x(t+1_\alpha) = F_\alpha (t,x(t)),
\quad
\forall t\geq t_0,
\quad
\forall \alpha \in \{1,2, \ldots, m\},
\end{equation}
and the condition $x(t_0)=x_0$.
\end{theorem}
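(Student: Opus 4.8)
The plan is to reduce the non-autonomous recurrence (\ref{alfat3.2}) to the autonomous case already settled in Theorem \ref{alfa.t1}, using exactly the lifting construction introduced just before this theorem. Taking $t_1=t_0$, I would form the nonvoid set $\widetilde{M}=\big\{ s \in \mathbb{Z}^m \, \big|\, s \geq t_0 \big\} \times M$ and the functions $G_\alpha \colon \widetilde{M} \to \widetilde{M}$, $G_\alpha(s,x)=\big(s+1_\alpha, F_\alpha(s,x)\big)$, so that (\ref{alfat3.2}) is rewritten as the autonomous recurrence (\ref{ec.alfa4}) for the unknown $y(\cdot)=\big(s(\cdot),x(\cdot)\big)$. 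The first observation is that the compatibility hypothesis (\ref{alfat3.1}) is precisely (\ref{ecalfap5.1}) with $t_1=t_0$, hence by Proposition \ref{alfa.p5}, $a)$, it is equivalent to the commutation relations $G_\alpha \circ G_\beta = G_\beta \circ G_\alpha$ for all $\alpha,\beta$. This places us exactly in the hypotheses of Theorem \ref{alfa.t1}, $b)$, applied over $\widetilde{M}$ to the commuting family $(G_\alpha)$.

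For existence, I would apply Theorem \ref{alfa.t1}, $b)$, with the initial datum $\big(t_0,(t_0,x_0)\big) \in \mathbb{Z}^m \times \widetilde{M}$ (note $(t_0,x_0)\in\widetilde{M}$ since $t_0\geq t_0$). This produces a function $y \colon \big\{ t \in \mathbb{Z}^m \, \big|\, t \geq t_0 \big\} \to \widetilde{M}$ satisfying (\ref{ec.alfa4}) with $y(t_0)=(t_0,x_0)$. Writing $y=(s,x)$, the converse part of Proposition \ref{alfa.p5}, $b)$, forces $s(t)=t$ for all $t\geq t_0$ and guarantees that the component $x(\cdot)$ solves (\ref{alfat3.2}) with $x(t_0)=x_0$, which is the required solution.

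For uniqueness, suppose $x^{(1)}, x^{(2)}$ both solve (\ref{alfat3.2}) with value $x_0$ at $t_0$. By the direct part of Proposition \ref{alfa.p5}, $b)$, the lifted functions $y^{(i)}(t)=\big(t,x^{(i)}(t)\big)$ both solve the autonomous recurrence (\ref{ec.alfa4}) with $y^{(i)}(t_0)=(t_0,x_0)$; the uniqueness clause of Theorem \ref{alfa.t1}, $b)$, then yields $y^{(1)}=y^{(2)}$, and comparing second components gives $x^{(1)}=x^{(2)}$.

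I do not expect any serious obstacle here, since the substantive work has already been isolated in Proposition \ref{alfa.p5} and Theorem \ref{alfa.t1}; the proof is essentially an assembly. The only points demanding care are keeping the two directions of Proposition \ref{alfa.p5}, $b)$, correctly assigned — the converse direction drives existence while the direct direction drives uniqueness — and checking that the hypotheses of Theorem \ref{alfa.t1}, $b)$, genuinely hold over $\widetilde{M}$, namely the commutativity supplied by part $a)$ of Proposition \ref{alfa.p5} and the membership of the initial datum in $\widetilde{M}$.
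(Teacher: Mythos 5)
Your proposal is correct and follows essentially the same route as the paper's own proof: lifting to $\widetilde{M}$ with the maps $G_\alpha(s,x)=(s+1_\alpha,F_\alpha(s,x))$, invoking Proposition \ref{alfa.p5}, $a)$, for commutativity, Theorem \ref{alfa.t1}, $b)$, for existence and uniqueness of the lifted solution, and Proposition \ref{alfa.p5}, $b)$, in both directions to transfer the result back. The only cosmetic difference is that you compare two arbitrary solutions in the uniqueness step, while the paper compares a candidate $\tilde{x}$ against the constructed solution; the argument is identical.
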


\begin{proof}
Let $\widetilde{M}=
\big\{ s \in \mathbb{Z}^m \, \big|\,
s \geq t_0 \big\} \times M$ and let
$G_\alpha \colon \widetilde{M} \to \widetilde{M}$,
\begin{equation*}
G_\alpha (s,x)=
\big( s+1_\alpha, F_\alpha (s,x) \big),
\quad
\forall (s,x) \in \widetilde{M}.
\end{equation*}

We apply the Proposition \ref{alfa.p5} (for $t_1=t_0$);
according the step $a)$, it follows that
$G_{\alpha}\circ G_{\beta}
=
G_{\beta} \circ G_{\alpha}$,
$\forall \alpha \in \{1,2, \ldots, m\}$.
From Theorem \ref{alfa.t1}, $b)$, it follows that there exists a unique function
$y(\cdot)= \big( s(\cdot), x(\cdot) \big)
\colon \big\{ t \in \mathbb{Z}^m \, \big|\,
t \geq t_0 \big\} \to \widetilde{M}$
which verifies
\begin{equation}
\label{alfat3.3}
y(t+1_\alpha) = G_\alpha (y(t)),
\quad
\forall t \geq t_0,
\quad
\forall \alpha \in \{1,2, \ldots, m\},
\end{equation}
and the condition $y(t_0)=(t_0,x_0)$.
From Proposition \ref{alfa.p5}, $b)$, it follows that $x(\cdot)$
verifies the relations {\rm (\ref{alfat3.2})}
and the condition $x(t_0)=x_0$.

Uniqueness of $x(\cdot)$: let $\tilde{x} \colon \big\{ t \in \mathbb{Z}^m \, \big|\,
t \geq t_0 \big\} \to M$ a function which verifies the relations {\rm (\ref{alfat3.2})}
and the condition $\tilde{x} (t_0)=x_0$.
From Proposition \ref{alfa.p5}, $b)$, it follows that the function
\begin{equation*}
\tilde{y} \colon \big\{ t \in \mathbb{Z}^m \, \big|\,
t \geq t_0 \big\} \to \widetilde{M},
\quad
\tilde{y}(t)=(t, \tilde{x}(t)),
\quad
\forall t\geq t_0,
\end{equation*}
verifies the relations {\rm (\ref{alfat3.3})}
and the condition $\tilde{y}(t_0)=(t_0,x_0)$.

From the uniqueness property of the solution of the recurrence {\rm (\ref{alfat3.3})}
(Theorem \ref{alfa.t1}, $b)$), it follows that the functions $y$ and $\tilde{y}$ coincide;
hence $\big( s(t), x(t) \big)=\big( t, \tilde{x}(t) \big)$,
$\forall t\geq t_0$;
we obtain $x(t)=\tilde{x}(t)$, $\forall t\geq t_0$.
\end{proof}

\begin{remark}
\label{alfa.o2}
Let $t_0, t_1, s_0 \in \mathbb{Z}^m$, $s_0 \geq t_1$ and $x_0 \in M$.

Let $\widetilde{M}=
\big\{ s \in \mathbb{Z}^m \, \big|\,
s \geq t_1 \big\} \times M$.

Suppose that $\tilde{x} \colon \big\{ t \in \mathbb{Z}^m \, \big|\,
t \geq s_0 \big\} \to M$ verifies, for any $t \geq s_0$, the recurrence {\rm (\ref{ec.alfa1})},
and the condition $\tilde{x}(s_0)=x_0$. Let $$s, x \colon \big\{ t \in \mathbb{Z}^m \, \big|\,
t \geq t_0 \big\} \to M,$$
\begin{equation*}
s(t)=t-t_0+s_0,
\quad
x(t)=\tilde{x}(t-t_0+s_0),
\quad
\forall t\geq t_0.
\end{equation*}

Easily be observed that the function
$y(\cdot)=\big(  s(\cdot), x(\cdot)  \big)
\colon \big\{ t \in \mathbb{Z}^m \, \big|\,
t \geq t_0 \big\} \to \widetilde{M}$
verifies, for each $t \geq t_0$, the recurrence {\rm (\ref{ec.alfa4})} and the condition
$y(t_0)=(s_0,x_0)$.

From here one obtains a new proof for the Proposition {\rm \ref{alfa.p1}};
Indeed, in the hypotheses of Proposition {\rm \ref{alfa.p1}}, it follows that
$\forall (s_0,x_0) \in \widetilde{M}$,
there exists a function $y(\cdot)$, which verifies, for each $t \geq t_0$,
the recurrence {\rm (\ref{ec.alfa4})} and the condition $y(t_0)=(s_0,x_0)$.
From Theorem {\rm \ref{alfa.t1}}, it follows that
$G_{\alpha}\circ G_{\beta}
=
G_{\beta} \circ G_{\alpha}$,
$\forall \alpha, \beta$, and from
Proposition {\rm \ref{alfa.p5}}
we obtain the relations {\rm (\ref{ecalfap5.1})}.
\end{remark}

\subsection{Extension theorems}

\begin{proposition}
\label{alfa.p6}
Let $\alpha_0 \in \{ 1,2, \ldots, m \}$,
$t_0 \in \mathbb{Z}^m$.

\noindent
$a)$ Let $F \colon \big\{ t \in \mathbb{Z}^m \, \big|\,
t \geq t_0-1_{\alpha_0} \big\} \times M \to M$.
If, for any $x_0 \in  M$, there exists at least one function
$x \colon \big\{ t \in \mathbb{Z}^m \, \big|\,
t \geq t_0-1_{\alpha_0} \big\} \to M$,
which verifies
\begin{equation}
\label{ecalfap6.1}
x(t+1_{\alpha_0}) = F (t,x(t)),
\quad
\forall t \geq t_0-1_{\alpha_0},
\end{equation}
and the condition $x(t_0)=x_0$, then
the function $F (t_0-1_{\alpha_0},\cdot)$ is surjective.

\noindent
$b)$ Suppose that for the functions $F_\alpha \colon \big\{ t \in \mathbb{Z}^m \, \big|\,
t \geq t_0-1_{\alpha_0} \big\} \times M \to M$,
the relations {\rm (\ref{alfat3.1})} hold,
$\forall t \geq t_0-1_{\alpha_0}$, $\forall x \in M$,
$\forall \alpha, \beta \in \{ 1,2, \ldots, m \}$.
If, for any $x_0 \in  M$, there exists at most one function
$x \colon \big\{ t \in \mathbb{Z}^m \, \big|\,
t \geq t_0-1_{\alpha_0} \big\} \to M$,
which verifies
\begin{equation}
\label{ecalfap6.2}
x(t+1_\alpha) = F_\alpha (t,x(t)),
\quad
\forall t \geq t_0-1_{\alpha_0},\,\,
\forall \alpha \in \{1,2, \ldots, m\},
\end{equation}
and the condition $x(t_0)=x_0$, then the function $F_{\alpha_0}(t_0-1_{\alpha_0},\cdot)$ is injective.
\end{proposition}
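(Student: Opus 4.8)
The plan is to obtain surjectivity in part $a)$ and injectivity in part $b)$ by the same device used in the autonomous case (Lemma~\ref{alfa.l1} and Proposition~\ref{alfa.p3}): construct a solution on $\big\{t\geq t_0-1_{\alpha_0}\big\}$ with a prescribed value at $t_0$, and then read off the desired property of $F_{\alpha_0}(t_0-1_{\alpha_0},\cdot)$ by evaluating the $\alpha_0$-recurrence at the base point $t_0-1_{\alpha_0}$. The only substantive difference from the autonomous setting is that in part $b)$ the solutions can no longer be written down through explicit compositions of the $G_\alpha$; instead I would manufacture them using Theorem~\ref{alfa.t3}.

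For part $a)$ I would mimic Lemma~\ref{alfa.l1} verbatim. Fix an arbitrary $y\in M$ and apply the hypothesis with $x_0=y$, producing a function $x(\cdot)$ on $\big\{t\geq t_0-1_{\alpha_0}\big\}$ that satisfies (\ref{ecalfap6.1}) with $x(t_0)=y$. Evaluating (\ref{ecalfap6.1}) at $t=t_0-1_{\alpha_0}$ gives $y=x(t_0)=F\big(t_0-1_{\alpha_0},x(t_0-1_{\alpha_0})\big)$, so $x(t_0-1_{\alpha_0})$ is a preimage of $y$. As $y$ was arbitrary, $F(t_0-1_{\alpha_0},\cdot)$ is surjective.

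For part $b)$ I would take $p,q\in M$ with $F_{\alpha_0}(t_0-1_{\alpha_0},p)=F_{\alpha_0}(t_0-1_{\alpha_0},q)$ and call this common value $x_0$. Because the compatibility relations (\ref{alfat3.1}) are assumed to hold on $\big\{t\geq t_0-1_{\alpha_0}\big\}$, Theorem~\ref{alfa.t3}, applied with base point $t_0-1_{\alpha_0}$, yields solutions $x_p,x_q\colon\big\{t\geq t_0-1_{\alpha_0}\big\}\to M$ of (\ref{ecalfap6.2}) with $x_p(t_0-1_{\alpha_0})=p$ and $x_q(t_0-1_{\alpha_0})=q$. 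Evaluating the $\alpha_0$-recurrence at $t=t_0-1_{\alpha_0}$ shows $x_p(t_0)=F_{\alpha_0}(t_0-1_{\alpha_0},p)=x_0$ and, identically, $x_q(t_0)=x_0$, so both solutions meet the same condition $x(t_0)=x_0$. The ``at most one'' hypothesis then forces $x_p=x_q$ on the whole domain, and evaluating at $t_0-1_{\alpha_0}$ gives $p=q$, which is the injectivity of $F_{\alpha_0}(t_0-1_{\alpha_0},\cdot)$.

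The main conceptual point to stress — and the step most easily mishandled — is the interplay between two different uniqueness notions. Theorem~\ref{alfa.t3} provides uniqueness indexed by the value at the base point $t_0-1_{\alpha_0}$, whereas the hypothesis of part $b)$ supplies uniqueness indexed by the value at $t_0$; it is precisely this second, a priori distinct, uniqueness that collapses the two preimages $p$ and $q$. Beyond this, the only routine verification is that Theorem~\ref{alfa.t3} genuinely applies with the shifted base point $t_0-1_{\alpha_0}$, which is immediate since its compatibility hypothesis is demanded on exactly the set $\big\{t\geq t_0-1_{\alpha_0}\big\}$ assumed here.
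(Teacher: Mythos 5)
Your proposal is correct and follows essentially the same route as the paper's own proof: part $a)$ is the verbatim evaluation-at-the-base-point argument, and part $b)$ uses Theorem~\ref{alfa.t3} with base point $t_0-1_{\alpha_0}$ to produce two solutions through $p$ and $q$, then invokes the ``at most one'' hypothesis at $t_0$ to force $p=q$. No gaps; the remark distinguishing the two uniqueness notions is a fair gloss on what the paper does implicitly.
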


\begin{proof}
$a)$ Let $z \in M$. There exists a function $x(\cdot)$ which verifies
{\rm (\ref{ecalfap6.1})} and the condition $x(t_0)=z$.
For $t = t_0-1_{\alpha_0}$, one obtains
$z=F(t_0-1_{\alpha_0}, x(t_0-1_{\alpha_0}))$.
Since $z$ is arbitrary, it follows that
$F (t_0-1_{\alpha_0},\cdot)$ is surjective.

$b)$ Let $p, q \in M$ such that
$F_{\alpha_0}(t_0-1_{\alpha_0},p)=
F_{\alpha_0}(t_0-1_{\alpha_0},q)$.
We can apply the Theorem \ref{alfa.t3}. There exist the functions
$x, \tilde{x} \colon  \big\{ t \in \mathbb{Z}^m \, \big|\,
t \geq t_0-1_{\alpha_0} \big\} \to M$
for which the relations {\rm (\ref{ecalfap6.2})} are true, and
$x(t_0-1_{\alpha_0})=p$,
$\tilde{x}(t_0-1_{\alpha_0})=q$.

Let $x_0=F_{\alpha_0}(t_0-1_{\alpha_0},p)=
F_{\alpha_0}(t_0-1_{\alpha_0},q)$. Then
\begin{equation*}
x(t_0)=F_{\alpha_0}(t_0-1_{\alpha_0},x(t_0-1_{\alpha_0}))
=F_{\alpha_0}(t_0-1_{\alpha_0},p)=x_0,
\end{equation*}
\begin{equation*}
\tilde{x}(t_0)=
F_{\alpha_0}(t_0-1_{\alpha_0},\tilde{x}(t_0-1_{\alpha_0}))
=F_{\alpha_0}(t_0-1_{\alpha_0},q)=x_0.
\end{equation*}
It follows that the functions $x$ and $\tilde{x}$ coincide;
hence $x(t_0-1_{\alpha_0})= \tilde{x}(t_0-1_{\alpha_0})$,
i.e., $p=q$.
\end{proof}

\begin{lemma}
\label{alfa.l3}
Let $\beta \in \{ 1,2, \ldots, m \}$
and $F \colon  \mathbb{Z}^m \times M \to M$.

Let $G \colon  \mathbb{Z}^m \times M \to \mathbb{Z}^m \times M$,\,\,
$G(t,x)=\big( t+1_{\beta} ,  F (t,x) \big)$,
$\forall (t,x) \in \mathbb{Z}^m \times M$.

$a)$ The function $G$ is injective if and only if, for any $t \in  \mathbb{Z}^m$,
the function $F(t, \cdot)$ is injective.

$b)$ The function $G$ is surjective if and only if, for any $t \in  \mathbb{Z}^m$,
the function $F(t, \cdot)$ is surjective.
\end{lemma}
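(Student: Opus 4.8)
The plan is to exploit the structure of $G$: its first component is the translation $t \mapsto t+1_\beta$ of $\mathbb{Z}^m$, which is a bijection with inverse $s \mapsto s-1_\beta$ and which does not depend on $x$. Because this first coordinate decouples completely and is invertible, both assertions reduce to transferring injectivity and surjectivity between $G$ and the slice maps $F(t,\cdot)$ with essentially no computation. I would prove each equivalence by treating the two implications separately.

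For part $a)$, I would first assume every $F(t,\cdot)$ is injective and take two points with $G(t_1,x_1)=G(t_2,x_2)$. Comparing first components gives $t_1+1_\beta=t_2+1_\beta$, hence $t_1=t_2=:t$; comparing second components gives $F(t,x_1)=F(t,x_2)$, and injectivity of $F(t,\cdot)$ forces $x_1=x_2$. Conversely, assuming $G$ injective, I would fix $t$ and suppose $F(t,x_1)=F(t,x_2)$; then the points $(t,x_1)$ and $(t,x_2)$ share the same image under $G$, so they coincide and $x_1=x_2$.

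For part $b)$, assuming each $F(t,\cdot)$ is surjective, I would take an arbitrary target $(s,y)\in\mathbb{Z}^m\times M$ and solve $G(t,x)=(s,y)$: the first coordinate forces $t=s-1_\beta$, and surjectivity of $F(s-1_\beta,\cdot)$ yields an $x$ with $F(s-1_\beta,x)=y$, so $G(s-1_\beta,x)=(s,y)$. Conversely, assuming $G$ surjective, I would fix $t$ and an arbitrary $y\in M$, then pull back the point $(t+1_\beta,y)$ to some $(s,x)$ with $G(s,x)=(t+1_\beta,y)$; comparing first coordinates gives $s=t$, whence $F(t,x)=y$.

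There is no genuine obstacle here; the only point that warrants attention is the surjectivity direction, where one must use that the shift $t\mapsto t+1_\beta$ is \emph{invertible} on $\mathbb{Z}^m$ (not merely injective) in order to guarantee that a suitable first coordinate $t=s-1_\beta$ exists in the preimage. Everything else is a direct matching of the two coordinates.
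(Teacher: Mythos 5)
Your argument is correct and coincides with the paper's own proof: both directions of each part are handled by the same coordinate-matching, with the preimage point $(s-1_\beta,x)$ used for surjectivity and the pullback of $(t+1_\beta,y)$ used in the converse. Nothing is missing.
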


\begin{proof}
$a)$ Suppose that the function $G$ is injective.
Let $t \in \mathbb{Z}^m$.

If, for $x_1, x_2 \in M$, we have $F (t,x_1)=F (t,x_2)$,
then $G (t,x_1)=G (t,x_2)$. It follows $(t,x_1)=(t,x_2)$; hence $x_1=x_2$.

Conversely, let us suppose that the functions $F(t, \cdot)$ are injective.
If $G (t_1,x_1)=G (t_2,x_2)$, then
$\big( t_1+1_{\beta} ,  F (t_1,x_1) \big)=
\big( t_2+1_{\beta} ,  F (t_2,x_2) \big)$.
Hence $t_1=t_2$ and $ F (t_1,x_1)= F (t_1,x_2)$.
It follows that $x_1=x_2$. We have obtained $(t_1,x_1)=(t_2,x_2)$.

$b)$ Suppose that $G$ is surjective.
Let $t \in \mathbb{Z}^m$. If $y \in M$,
then there exists $(s, x) \in \mathbb{Z}^m \times M$,
such that $G(s,x)=(t+1_{\beta},y)$, equivalent to $\big( s+1_{\beta} ,  F (s,x) \big)
=\big(t+1_{\beta},y\big)$. It follows that $s=t$ and $F (s,x)=y$. Hence $F(t,x)=y$.

Conversely, let us suppose that the functions $F(t, \cdot)$ are surjective.

Let $(s, y) \in \mathbb{Z}^m \times M$.
There exists $x\in M$, such that $F (s-1_{\beta},x)=y$. We have
$G (s-1_{\beta},x)=
\big( s, F(s-1_{\beta},x) \big)=(s,y)$.
\end{proof}

\begin{theorem}
\label{alfa.t4}
Let $M$ be a nonvoid set. For each
$\alpha \in \{ 1,2, \ldots, m \}$, we consider the function
$F_{\alpha} \colon \mathbb{Z}^m \times M \to M$,
to which we associate the recurrence equation
\begin{equation}
\label{ecalfat4.1}
x(t+1_\alpha) = F_\alpha (t,x(t)),
\quad
\forall \alpha \in \{1,2, \ldots, m\}.
\end{equation}

The following statements are equivalent:

\noindent
$i)$ For any $\alpha\in \{1,2, \ldots, m\}$
and any $t\in \mathbb{Z}^m$, the functions $F_{\alpha}(t, \cdot)$ are bijective
and
\begin{equation}
\label{ecalfat4.2}
F_\alpha(t+1_\beta,F_\beta (t,x))
=
F_\beta(t+1_\alpha,F_\alpha (t,x)),
\end{equation}
$$
\forall (t,x) \in \mathbb{Z}^m \times M,
\,\,\,
\forall \alpha, \beta \in \{1,2, \ldots, m\}.
$$

\noindent
$ii)$ For any pair $(t_0,x_0) \in \mathbb{Z}^m \times M$,
and any index $\alpha_0 \in \{ 1,2, \ldots, m \}$,
there exists a unique function
$x \colon \big\{ t \in \mathbb{Z}^m \, \big|\,
t \geq t_0-1_{\alpha_0} \big\} \to M$,
which, for each $t \geq t_0-1_{\alpha_0}$ verifies the relations
{\rm (\ref{ecalfat4.1})},
and also the condition $x(t_0)=x_0$.

\vspace{0.1 cm}
\noindent
$iii)$ For any $t_0, t_1 \in \mathbb{Z}^m$, with $t_1 \leq t_0$,
and any $x_0 \in  M$, there exists a function
$x \colon \big\{ t \in \mathbb{Z}^m \, \big|\,
t \geq t_1 \big\} \to M$,
which, for any $t \geq t_1$, verifies the relations
{\rm (\ref{ecalfat4.1})}, and also the condition $x(t_0)=x_0$.

\vspace{0.1 cm}
\noindent
$iv)$ For any
$(t_0,x_0) \in \mathbb{Z}^m \times M$,
there exists a unique function
$x \colon \mathbb{Z}^m \to M$,
which, for any $t \in \mathbb{Z}^m $, verifies the relation
{\rm (\ref{ecalfat4.1})},
and also $x(t_0)=x_0$.
\end{theorem}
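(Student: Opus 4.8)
The plan is to reduce everything to the autonomous Theorem \ref{alfa.t2}. Attaching to the system the augmented maps $G_\alpha \colon \mathbb{Z}^m \times M \to \mathbb{Z}^m \times M$, $G_\alpha(t,x)=(t+1_\alpha,F_\alpha(t,x))$, turns the non-autonomous recurrence (\ref{ecalfat4.1}) into the autonomous recurrence $y(t+1_\alpha)=G_\alpha(y(t))$ on $\widetilde{M}=\mathbb{Z}^m\times M$ in the single unknown $y=(s,x)$. This gives a dictionary: by Lemma \ref{alfa.l3}, each $G_\alpha$ is bijective iff every slice $F_\alpha(t,\cdot)$ is bijective; by Proposition \ref{alfa.p5}$(a)$, the $G_\alpha$ commute iff the compatibility identity (\ref{ecalfat4.2}) holds; and by Proposition \ref{alfa.p5}$(b)$ together with Lemma \ref{alfa.l2}, a function $x$ solves (\ref{ecalfat4.1}) with $x(t_0)=x_0$ precisely when $y(t)=(t,x(t))$ solves the augmented recurrence with the diagonal datum $y(t_0)=(t_0,x_0)$, the $s$-component being rigidly forced to $s(t)=t$ by the value $s(t_0)=t_0$. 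In particular, statement $i)$ is exactly statement $i)$ of Theorem \ref{alfa.t2} for the $G_\alpha$.

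For $i)\Rightarrow ii),iii),iv)$ I would assume $i)$, pass to the $G_\alpha$ (bijective and commuting by the dictionary), and invoke Theorem \ref{alfa.t2}, which grants all of its equivalent existence-and-uniqueness conclusions for the augmented recurrence on the domains $\{t\ge t_0-1_{\alpha_0}\}$, $\{t\ge t_1\}$ and $\mathbb{Z}^m$. Each augmented solution carrying a diagonal datum translates back, via the correspondence above, to the desired solution of (\ref{ecalfat4.1}); uniqueness transfers because every solution $x$ produces the augmented solution $y=(t,x(t))$, so two distinct $x$'s would yield two distinct $y$'s.

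For the converse implications I would argue directly, following the blueprint of the proof of Theorem \ref{alfa.t2}. Starting from any of $ii),iii),iv)$: restricting the given solutions to $\{t\ge t_0-1_{\alpha_0}\}$ and applying Proposition \ref{alfa.p6}$(a)$ yields surjectivity of $F_{\alpha_0}(t_0-1_{\alpha_0},\cdot)$, and as $(t_0,\alpha_0)$ vary so that $t_0-1_{\alpha_0}$ sweeps out $\mathbb{Z}^m$, every slice $F_\alpha(t,\cdot)$ is onto; restricting instead to $\{t\ge t_0\}$ and applying Proposition \ref{alfa.p1} gives (\ref{ecalfat4.2}). With commutation and surjectivity in hand, injectivity follows: for $ii)$ it is immediate from Proposition \ref{alfa.p6}$(b)$, whose hypotheses (commutation plus at most one solution on $\{t\ge t_0-1_{\alpha_0}\}$) are exactly what $ii)$ supplies. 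For $iii)$ and $iv)$, where uniqueness is asserted on a larger domain, I mimic the corresponding step of Theorem \ref{alfa.t2}: given $F_{\alpha_0}(t_0-1_{\alpha_0},p)=F_{\alpha_0}(t_0-1_{\alpha_0},q)$, I use the dictionary and Proposition \ref{alfa.p2}$(c)$ (applied to the onto, commuting $G_\alpha$, with a diagonal value at $t_0-1_{\alpha_0}$) to build two global solutions $x,\tilde x\colon\mathbb{Z}^m\to M$ with $x(t_0-1_{\alpha_0})=p$, $\tilde x(t_0-1_{\alpha_0})=q$ sharing the value $x(t_0)=\tilde x(t_0)=F_{\alpha_0}(t_0-1_{\alpha_0},p)$; the uniqueness in $iii)/iv)$ (on a domain containing $t_0-1_{\alpha_0}$, using a base point $t_1\le t_0-1_{\alpha_0}$ in the case of $iii)$) forces $x=\tilde x$, hence $p=q$. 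This delivers $i)$ and closes the equivalences.

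The step I expect to be the main obstacle is the handling of the diagonal initial condition. Theorem \ref{alfa.t2} speaks about arbitrary data $y_0\in\widetilde{M}$, whereas only data of the form $(t_0,x_0)$ correspond to genuine solutions of (\ref{ecalfat4.1}); the crucial point, supplied by Lemma \ref{alfa.l2}, is that the $s$-component of any augmented solution is rigid ($s(t)=t-t_0+s_0$), so the condition $s(t_0)=t_0$ is both necessary and sufficient to recover a non-autonomous solution, making the translation faithful in both directions. The second delicate point is purely bookkeeping: one must let $(t_0,\alpha_0)$ range freely so that $t_0-1_{\alpha_0}$ exhausts $\mathbb{Z}^m$, in order to upgrade the pointwise surjectivity and injectivity produced by Propositions \ref{alfa.p6} and \ref{alfa.p1} into bijectivity of \emph{every} slice $F_\alpha(t,\cdot)$ demanded by $i)$.
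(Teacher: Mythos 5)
Your proposal is correct and rests on the same core device as the paper: the augmented autonomous system $G_\alpha(t,x)=(t+1_\alpha,F_\alpha(t,x))$ on $\mathbb{Z}^m\times M$, with Lemma \ref{alfa.l3} translating bijectivity of the slices, Proposition \ref{alfa.p5}$(a)$ translating commutativity, and Lemma \ref{alfa.l2} pinning the $s$-component to $s(t)=t$ so that the diagonal datum $(t_0,x_0)$ makes the dictionary faithful in both directions. Where you genuinely diverge is the implication graph. The paper proves the cycle $ii)\Rightarrow i)\Rightarrow iv)\Rightarrow iii)\Rightarrow ii)$; its one delicate step is $iv)\Rightarrow iii)$, where it must first verify statement $vi)$ of Theorem \ref{alfa.t2} for the augmented system with \emph{arbitrary} (non-diagonal) data $(s_0,x_0)$, via the shift $x(t)=\tilde x(t-t_0+s_0)$, before invoking the autonomous implication $vi)\Rightarrow iv)$. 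Your hub-and-spoke arrangement ($i)$ equivalent to each of $ii),iii),iv)$ separately) avoids that shift argument entirely, but in exchange you must prove injectivity of the slices directly from $iii)$ and from $iv)$; your argument for that --- build two global solutions through $p$ and $q$ at $t_0-1_{\alpha_0}$ via Proposition \ref{alfa.p2}$(c)$ applied to the augmented maps, then let the uniqueness hypothesis collapse them --- is exactly the injectivity step of Theorem \ref{alfa.t2}, $iii)\Rightarrow i)$ and $v)\Rightarrow i)$, transplanted through the dictionary, and it works. Both routes are sound and of comparable length. One caveat you share with the paper: statement $iii)$ as printed asserts only existence, and existence alone does not force injectivity (e.g.\ $m=1$, $M=\mathbb{N}$, $F(t,x)=\max(x-1,0)$ satisfies existence on every $\{t\geq t_1\}$ but is not injective); like the paper's own proof of $iv)\Rightarrow iii)$ and its claim that $iii)\Rightarrow ii)$ is obvious, your converse from $iii)$ silently reads ``unique'' into $iii)$, which is clearly the intended statement.
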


\begin{proof}
$ii)\Longrightarrow i)$: Let $t_1 \in \mathbb{Z}^m$.
For any $t_0 \geq t_1$ and any $x_0 \in  M$,
there exists a unique function
$x \colon \big\{ t \in \mathbb{Z}^m \, \big|\,
t \geq t_0-1_{\alpha_0} \big\} \to M$,
which, for any $t \geq t_0-1_{\alpha_0}$, verifies the relations {\rm (\ref{ecalfat4.1})},
and also the condition $x(t_0)=x_0$.

The restriction of the function $x(\cdot)$ to
$\big\{ t \in \mathbb{Z}^m \, \big|\, t \geq t_0 \big\}$ verifies, for any $t \geq t_0$,
the recurrence {\rm (\ref{ecalfat4.1})},
and also the condition $x(t_0)=x_0$.

From the Proposition \ref{alfa.p1} it follows that the
relations {\rm (\ref{ecalfat4.2})} hold, for any $t\geq t_1$.
Since $t_1$ is arbitrary,
we deduce that the relations {\rm (\ref{ecalfat4.2})}
are true, for any $t\in \mathbb{Z}^m $.

The surjectivity of functions $F_{\alpha}(t, \cdot)$
follows from Proposition \ref{alfa.p6}, $a)$.

The injectivity of functions $F_{\alpha}(t, \cdot)$
follows from the Proposition \ref{alfa.p6}, $b)$.

$i)\Longrightarrow iv)$: For each $\alpha \in \{ 1,2, \ldots, m \}$, we consider the function
\begin{equation*}
G_{\alpha} \colon  \mathbb{Z}^m \times M \to \mathbb{Z}^m \times M,
\quad
G_{\alpha}(t,x)=\big( t+1_{\alpha} ,  F (t,x) \big),\,\,\,
\forall (t,x) \in \mathbb{Z}^m \times M.
\end{equation*}
Similar to the proof
of Proposition \ref{alfa.p5}, it is shown that the relations {\rm (\ref{ecalfat4.2})}
are true, for any $(t,x) \in \mathbb{Z}^m \times M$
if and only if
$G_{\alpha}\circ G_{\beta}=G_{\beta} \circ G_{\alpha}$.

From Lemma \ref{alfa.l3}, we deduce that, for any $\alpha\in \{1,2, \ldots, m\}$, the function
$G_{\alpha}$ is bijective.

Let $(t_0,x_0) \in \mathbb{Z}^m \times M$. According the Theorem \ref{alfa.t2}, $vi)$,
there exists a unique function
$y(\cdot)=(s(\cdot), x(\cdot)) \colon \mathbb{Z}^m \to
\mathbb{Z}^m \times M$,
which, for any $t \in \mathbb{Z}^m $ verifies the relations
\begin{equation}
\label{ecalfat4.3}
y(t+1_\alpha) = G_\alpha (y(t)),
\quad
\forall \alpha \in \{1,2, \ldots, m\},
\end{equation}
and $y(t_0)=(t_0,x_0)$,
which are equivalent to
\begin{equation}
\label{ecalfat4.4}
  \begin{cases}
    x(t+1_\alpha)=F_\alpha (s(t),x(t)) \\
    s(t+1_\alpha)=s(t)+1_\alpha
  \end{cases}
\!\!\!\!\!\!,
\quad
\forall \alpha \in \{1,2, \ldots, m\}
\end{equation}
and $s(t_0)=t_0$, $x(t_0)=x_0$.

From Lemma \ref{alfa.l2}, we obtain
$s(t)=t$, $\forall t \in \mathbb{Z}^m$.
Replacing in the first relation of (\ref{ecalfat4.4}),
it follows that the function
$x \colon \mathbb{Z}^m \to M$ verifies the
relations {\rm (\ref{ecalfat4.1})},
$\forall t \in \mathbb{Z}^m$.

Uniqueness of $x(\cdot)$: let $\tilde{x} \colon \mathbb{Z}^m  \to M$
be a function which verifies the relations {\rm (\ref{ecalfat4.1})},
$\forall t \in \mathbb{Z}^m$,
and the condition $\tilde{x} (t_0)=x_0$.
Easily finds that the function
\begin{equation*}
\tilde{y} \colon  \mathbb{Z}^m \to \mathbb{Z}^m \times M,
\quad
\tilde{y}(t)=(t, \tilde{x}(t)),
\quad
\forall t \in \mathbb{Z}^m,
\end{equation*}
verifies the relations {\rm (\ref{ecalfat4.3})},
$\forall t \in \mathbb{Z}^m$,
and the condition $\tilde{y}(t_0)=(t_0,x_0)$.

From the uniqueness property of solutions of the recurrence {\rm (\ref{ecalfat4.3})}
(according Theorem \ref{alfa.t2}, $vi)$) it follows that the
functions $y$ and $\tilde{y}$ coincide;
hence $\big( t, x(t) \big)=\big( t, \tilde{x}(t) \big)$,
$\forall t \in \mathbb{Z}^m$; we obtain $x(t)=\tilde{x}(t)$,
$\forall t \in \mathbb{Z}^m$.

$iv)\Longrightarrow iii)$: For each
$\alpha \in \{ 1,2, \ldots, m \}$, we consider the function
$G_{\alpha}$ defined as in the proof of the
implication $i)\Longrightarrow iv)$.

Let $t_0 \in \mathbb{Z}^m$ and $(s_0,x_0) \in \mathbb{Z}^m \times M$.
We shall show that there exists a unique function
$y(\cdot)=(s(\cdot), x(\cdot)) \colon \mathbb{Z}^m \to
\mathbb{Z}^m \times M$,
which, for any $t \in \mathbb{Z}^m $, verifies the relations
{\rm (\ref{ecalfat4.3})}, and $y(t_0)=(s_0,x_0)$.

There exists a unique function
$\tilde{x} \colon \mathbb{Z}^m \to M$
which, for any $t \in \mathbb{Z}^m $ verifies the relations
{\rm (\ref{ecalfat4.1})},
and also the condition $\tilde{x}(s_0)=x_0$.

Let
$$
s \colon \mathbb{Z}^m \to \mathbb{Z}^m,
\quad
x \colon \mathbb{Z}^m \to M,
$$
\begin{equation*}
s(t)=t-t_0+s_0,
\quad
x(t)=\tilde{x}(t-t_0+s_0),
\quad
\forall t \in \mathbb{Z}^m.
\end{equation*}

Easily finds that the function
$y(\cdot)=(s(\cdot), x(\cdot)) \colon \mathbb{Z}^m \to
\mathbb{Z}^m \times M$
verifies, for any $t \in \mathbb{Z}^m$, the
recurrence {\rm (\ref{ecalfat4.3})} and
$y(t_0)=(s_0,x_0)$.

The uniqueness of $y(\cdot)$: Let $\tilde{y}(\cdot)=( \sigma (\cdot), z(\cdot))
\colon \mathbb{Z}^m \to \mathbb{Z}^m \times M$ which
verifies, for any $t \in \mathbb{Z}^m$, the recurrence {\rm (\ref{ecalfat4.3})} and the condition
$\tilde{y}(t_0)=(s_0,x_0)$.
Hence, for any $t \in \mathbb{Z}^m$, the functions $ \sigma (\cdot)$, $z(\cdot)$
verify the recurrence {\rm (\ref{ecalfat4.4})} and $\sigma (t_0)=s_0$, $z(t_0)=x_0$.
From Lemma \ref{alfa.l2}, it follows
$\sigma (t)=t-t_0+s_0=s(t)$,
$\forall t \in \mathbb{Z}^m$.

One observes immediately that the function
\begin{equation*}
\tilde{z} \colon \mathbb{Z}^m \to M,
\quad
\tilde{z}(t)=z(t+t_0-s_0),
\quad
\forall t \in \mathbb{Z}^m,
\end{equation*}
verifies the recurrence
{\rm (\ref{ecalfat4.1})}, $\forall t \in \mathbb{Z}^m $,
and $\tilde{z}(s_0)=x_0$. It follows that the functions
$\tilde{x}$ and $\tilde{z}$ coincide.
Hence, $\forall t\in \mathbb{Z}^m$, we have
$\tilde{z}(t-t_0+s_0)=\tilde{x}(t-t_0+s_0)$,
i.e., $z(t)=x(t)$.

Since $\sigma (\cdot)=s(\cdot)$ and
$z(\cdot)=x(\cdot)$, we have $\tilde{y}(\cdot)=y(\cdot)$.

Hence, for the recurrence {\rm (\ref{ecalfat4.3})}
we can apply the Theorem \ref{alfa.t2}, implication $vi)\Longrightarrow iv)$.

Let $t_0, t_1 \in \mathbb{Z}^m$,
with $t_1 \leq t_0$, and $x_0 \in  M$.

There exists a unique function
$\tilde{x} \colon \mathbb{Z}^m \to M$,
such that $\tilde{x}(t_0)=x_0$ and the relations
{\rm (\ref{ecalfat4.1})} are true,
$\forall t \in \mathbb{Z}^m $.
It is sufficient to select $x$ as being the restriction of $\tilde{x}$ to
$ \big\{ t \in \mathbb{Z}^m \, \big|\, t \geq t_1 \big\}$.

Uniqueness of the function $x(\cdot)$:
Let $z \colon \big\{ t \in \mathbb{Z}^m \, \big|\,
t \geq t_1 \big\} \to M$,
which, for any $t \geq t_1$ verifies the recurrence
{\rm (\ref{ecalfat4.1})},
and $z(t_0)=x_0$.

Let
$y, \tilde{y} \colon \big\{ t \in \mathbb{Z}^m \, \big|\,
t \geq t_1 \big\} \to \mathbb{Z}^m \times M$,
\begin{equation*}
y(t)=(t,x(t)),
\quad
\tilde{y}(t)=(t,z(t)),
\quad
\forall t \geq t_1.
\end{equation*}

One observes immediately that $y$ and $\tilde{y}$
verify the recurrence {\rm (\ref{ecalfat4.3})},
$\forall t \geq t_1$;
we have also $y(t_0)=\tilde{y}(t_0)=(t_0,x_0)$.
From Theorem \ref{alfa.t2}, it follows
that the functions $y$ and $\tilde{y}$ coincide.
We obtain $(t,x(t))=(t,z(t))$, $\forall t \geq t_1$;
hence $x(t)=z(t)$, $\forall t \geq t_1$.

$iii)\Longrightarrow ii)$ is an obvious implication.
\end{proof}


\section{Example}

Let $M$ be a nonvoid set,
$\big ( N, \cdot, e \big )$ be a monoid
and let $\varphi  \colon N \times M \to M$
be an action of the monoid $N$ on the set $M$, i.e.,
\begin{equation}
\label{act}
\varphi (ab, x)= \varphi \big( a, (b, x) \big),\,\,\,
\varphi (e, x)=x,
\quad
\forall a,b \in N,
\forall x \in M.
\end{equation}
For each $a\in N$, $x\in M$, we denote
$\varphi (a, x)=ax$ (not to be confused with the monoid operation $N$).
The relations {\rm (\ref{act})} become
\begin{equation*}
(ab) x=  a(b x) ,\,\,\,
ex=x,
\quad
\forall a,b \in N,
\forall x \in M.
\end{equation*}

\noindent
We consider $a_1,a_2, \ldots, a_m \in N$,
such that
$a_\alpha a_\beta = a_\beta a_\alpha$,
$\forall \alpha, \beta \in \{1,2, \ldots, m\}$.

For each pair $(t_0,x_0) \in \mathbb{Z}^m \times M$,
the recurrence
\begin{equation}
\label{ecalfaexm1.1}
x(t+1_\alpha) = a_\alpha x(t),
\quad
\forall \alpha \in \{1,2, \ldots, m\},
\end{equation}
with the initial condition $x(t_0)=x_0$, has unique solution
$$
x \colon \big\{ t \in \mathbb{Z}^m \, \big|\,
t \geq t_0 \big\} \to M,
$$
\begin{equation}
\label{ecalfaexm1.2}
x(t)=a_1^{(t^1-t^1_0)}a_2^{(t^2-t^2_0)}
\cdot
\ldots
\cdot
a_m^{(t^m-t^m_0)}x_0.
\end{equation}
This can be obtained by applying Theorem {\rm \ref{alfa.t1}} for the functions
$G_{\alpha} \colon M \to M$,
$G_{\alpha}(x)=a_{\alpha}x$,
$\forall x \in M$.
We have
$G_{\alpha}\circ G_{\beta}(x)
=
G_{\beta} \circ G_{\alpha}(x)=
a_{\alpha} a_{\beta} x$.
One observes that, for any $t\in \mathbb{N}^m$,
\begin{equation}
\label{ecalfaexm1.3}
G_1^{(t^1)} \circ G_2^{(t^2)}  \circ
\ldots \circ G_m^{(t^m)}(x)
=a_1^{t^1}a_2^{t^2}
\cdot
\ldots
\cdot
a_m^{t^m}x.
\end{equation}

Suppose that for any $\alpha \in \{1,2, \ldots, m\}$,
$a_{\alpha}$ is invertible;
then $G_{\alpha}$ is bijective, with the inverse
$G_{\alpha}^{-1}(x)=a_{\alpha}^{-1}x$.
We find that the formula
{\rm (\ref{ecalfaexm1.3})} is true for any $t \in \mathbb{Z}^m$.
According the Remark {\rm \ref{alfa.o1}},
there exists a unique function
$\tilde{x} \colon \mathbb{Z}^m \to M$,
solution of the recurrence {\rm (\ref{ecalfaexm1.1})},
with $\tilde{x}(t_0)=x_0$; the function
$\tilde{x}(\cdot)$ is a unique extension
of $x(\cdot)$ and it is defined
by the formula {\rm (\ref{ecalfaexm1.2})},
but for each $t\in \mathbb{Z}^m$.

\vspace{0.2 cm}
{\bf Particular cases:}

{\bf 1)} Let $(M, +)$ be a commutative monoid;
we consider the action of $M$ on himself
\begin{equation*}
\varphi  \colon M \times M \to M,
\quad
\varphi (a,x)=a+x,
\quad
\forall a,x \in M.
\end{equation*}

In this case the recurrence {\rm (\ref{ecalfaexm1.1})} becomes
\begin{equation*}
x(t+1_\alpha) = a_\alpha + x(t),
\quad
\forall \alpha \in \{1,2, \ldots, m\},
\end{equation*}
and the formula {\rm (\ref{ecalfaexm1.2})} can be written
\begin{equation*}
x(t)=(t^1-t^1_0)a_1+(t^2-t^2_0)a_2
+
\ldots
+
(t^m-t^m_0)a_m+x_0.
\end{equation*}

{\bf 2)} Let $K$ be a field.

We consider
$\big ( N, \cdot, e \big )=\big ( \mathcal{M}_n(K), \cdot, I_n \big )$,
$M=K^n=\mathcal{M}_{n,1}(K)$ and the action
\begin{equation*}
\varphi  \colon \mathcal{M}_n(K) \times K^n \to K^n,
\quad
\varphi (A,x)=Ax,
\quad
\forall A \in \mathcal{M}_n(K) , \,\,
\forall x \in K^n.
\end{equation*}

\noindent
Let $A_1,A_2, \ldots, A_m \in \mathcal{M}_n(K)$,
such that
$A_\alpha A_\beta = A_\beta A_\alpha$,
$\forall \alpha, \beta \in \{1,2, \ldots, m\}$.

It follows that,
for each pair $(t_0,x_0) \in \mathbb{Z}^m \times K^n$,
the recurrence
\begin{equation}
\label{ecalfaexm2.1}
x(t+1_\alpha) = A_\alpha x(t),
\quad
\forall \alpha \in \{1,2, \ldots, m\},
\end{equation}
with the initial condition $x(t_0)=x_0$, has unique solution
$$
x \colon \big\{ t \in \mathbb{Z}^m \, \big|\,
t \geq t_0 \big\} \to K^n,
$$
\begin{equation}
\label{ecalfaexm2.2}
x(t)=A_1^{(t^1-t^1_0)}A_2^{(t^2-t^2_0)}
\cdot
\ldots
\cdot
A_m^{(t^m-t^m_0)}x_0.
\end{equation}

If, for each $\alpha \in \{1,2, \ldots, m\}$,
the matrix $A_{\alpha}$ is invertible,
then there exists a unique function
$\tilde{x} \colon \mathbb{Z}^m \to K^n$,
solution of the recurrence {\rm (\ref{ecalfaexm2.1})},
with $\tilde{x}(t_0)=x_0$; the function
$\tilde{x}(\cdot)$ is a unique extension
of $x(\cdot)$ and it is defined by the formula {\rm (\ref{ecalfaexm2.2})},
but for each $t\in \mathbb{Z}^m$.

\section*{Acknowledgments}

The work has been funded by the Sectoral Operational Programme Human Resources
Development 2007-2013 of the Ministry of European Funds through
the Financial Agreement POSDRU/159/1.5/S/132395.

Partially supported by University Politehnica of Bucharest and by Academy of Romanian Scientists.
Special thanks goes to Prof. Dr. Ionel \c Tevy, who was willing to participate in our discussions
about multivariate sequences and to suggest the title ``multiple recurrences".

\end{document}